\numberwithin{equation}{section}
\numberwithin{figure}{section}
\def\comment#1{}
\newcommand{\R}{\mathbb{R}}
\newcommand{\N}{\mathbb{N}}
\newcommand{\D}{\mathbb{D}}
\newcommand{\Z}{\mathbb{Z}}
\newcommand{\T}{\mathbb{S}^1}
\newcommand{\Homeo}{\mathsf{Homeo}}
\newcommand{\stab}[2]{\mathsf{Stab}_{#1}(#2)}
\newcommand{\PSL}{\mathsf{PSL}}
\newcommand{\SL}{\mathsf{SL}}
\newcommand{\mP}{\mathsf{P}}
\newcommand{\Fix}{\mathsf{Fix}}
\newcommand{\Aut}{\mathsf{Aut}}
\newcommand{\ord}{\mathsf{ord}}
\DeclareMathOperator{\len}{\mathrm{length}}
\DeclareMathOperator{\CO}{\mathsf{CO}}
\DeclareMathOperator{\LO}{\mathsf{LO}}
\newtheorem{thm}{Theorem}[section]
\newtheorem*{thm*}{Theorem}
\newtheorem{claim}{Claim}
\newtheorem{lem}[thm]{Lemma}
\newtheorem{prop}[thm]{Proposition}
\newtheorem{cor}[thm]{Corollary}
\theoremstyle{definition}
\newtheorem{dfn}[thm]{Definition}
\newtheorem{con}[thm]{Convention}
\theoremstyle{remark}
\newtheorem{rem}[thm]{Remark}
\newtheorem{ex}[thm]{Example}
\title{Ping-pong configurations and circular orders on free groups}
\date{}
\author{
Dominique Malicet
\thanks{LAMA,  Universit\'e  Paris-Est  Marne-la-Vall\'ee,  CNRS UMR 8050, 5  bd.~Descartes,  77454  Champs  sur Marne, France.} 
\and 
Kathryn Mann
\thanks{Dept.~Mathematics, Brown University, 151 Thayer Street,
Providence, RI 02912.}  
\and 
Crist\'obal Rivas
\thanks{Dpto.~Matem\'atica y C.C.~Universidad de Santiago de Chile,
Alameda 3363, Estaci\'on Central, Santiago, Chile.}
\and  
Michele Triestino
\thanks{IMB, Universit\'e Bourgogne Franche-Comt\'e, CNRS UMR 5584,
9 av.~Alain Savary,
21000 Dijon, France.}}
\begin{document}
\maketitle

\begin{abstract}
We discuss actions of free groups on the circle with ``ping-pong'' dynamics; these are dynamics determined by a finite amount of combinatorial data, analogous to Schottky domains or Markov partitions.   Using this, we show that the free group $F_n$ admits an isolated circular order if and only if $n$ is even, in stark contrast with the case for linear orders.  This answers a question from \cite{MR}. Inspired by work in \cite{markov-partition-vfree}, we also exhibit examples of ``exotic'' isolated points in the space of all circular orders on $F_2$.   Analogous results are obtained for linear orders on the groups $F_n\times \mathbb Z$.\footnote{\textbf{MSC\textup{2010}:} Primary 20F60, 57M60.
Secondary  20E05, 37C85, 37E05, 37E10, 57M60.
}
\end{abstract}

\section{Introduction}

Let $G$ be a group. A (left-invariant) \emph{linear order}, often called a \emph{left order} on $G$ is a total order invariant under left multiplication.  Left-invariance directly implies that the order is determined by the set of elements greater than the identity, called the \emph{positive cone}.  It is often far from obvious whether a given order can be determined by only finitely many inequalities, or whether a given group admits such a finitely-determined order.   This latter question turns out to be quite natural from an algebraic perspective, and can be traced back to Arora and McCleary \cite{Arora-McCleary} for the special case of free groups.  McCleary answered this question for free groups shortly afterwards, showing that $F_n$ has no finitely determined orders~\cite{McCleary2}. 

The question of finite determination gained a topological interpretation following Sikora's definition of the \emph{space of linear orders} on $G$ in \cite{sikora}.  This space, denoted $\LO(G)$, is the set of all linear orders on $G$ endowed with the topology generated by open sets 
\[U_{(\preceq, X)} :=\{ {\preceq'} \mid {x\preceq' y } \text{ iff } {x \preceq y} \text{ for all } x,  y \in X\}\]
as $X$ ranges over all finite sets of $G$.   Finitely determined linear orders on $G$ are precisely the \emph{isolated points} of $\LO(G)$;  going forward, we will refer to these as \emph{isolated orders}.  
This correspondence between isolated points and finitely determined orders is perhaps the simplest instance of the general theme that topological properties of $\LO(G)$ should reflect algebraic properties of $G$.   

Presently, several families of groups are known to either admit or fail to admit isolated orders, with proofs that use both purely algebraic and dynamical methods.  Some examples of groups that do \emph{not} admit isolated orders include free abelian groups \cite{sikora}, free groups \cite{McCleary2, NavasFourier}, free products of arbitrary linearly orderable groups \cite{RivasLO}, and some amalgamated free products such as fundamental groups of orientable closed surfaces \cite{ABR}.    Large families of groups which do have isolated orders include braid groups \cite{DD,DehornoyBook}, groups of the form $\langle x,y\mid x^n=y^m\rangle$ ($n, m\in \Z$) \cite{ItoAlg,NavasAlg}, and groups with triangular presentations \cite{dehornoy14}.  (In fact, all of these latter examples have orders for which the positive cone is finitely generated as a semi-group, a strictly stronger condition.)
As a consequence of our work here, we give a family of groups where, interestingly, \emph{both} behaviors occur.  

\begin{thm} \label{LO_thm}
Let $F_n$ denote the free group of $n$ generators.  The group $F_n\times \Z$ has isolated linear orders if and only if $n$ is even.
\end{thm}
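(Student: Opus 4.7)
The plan is to deduce Theorem~\ref{LO_thm} from the main theorem on isolated circular orders on $F_n$ (stated in the abstract) via the classical correspondence between circular orders on a group $G$ and linear orders on a central $\Z$-extension of $G$. For any circular order $c$ on $G$, the associated Euler cocycle determines a central extension $1\to \Z \to \widetilde G \to G \to 1$ on which $c$ lifts canonically to a left-invariant linear order $\widetilde c$, with the central generator $t\in \widetilde G$ positive and cofinal. Since $H^2(F_n;\Z)=0$, all such extensions for $G=F_n$ split, so one obtains a continuous injection
\[
\Phi\colon \CO(F_n) \longrightarrow \LO(F_n \times \Z)
\]
whose image $\mathcal{U}$ is precisely the set of linear orders on $F_n\times\Z$ for which $t$ is positive and cofinal.

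The first task is to verify that $\mathcal{U}$ is \emph{open} in $\LO(F_n\times\Z)$. Positivity of $t$ is manifestly an open condition; cofinality of $t$ is also open because, given a finite generating set $S$ of $F_n\times\Z$ and an integer $N$ with $s^{\pm 1} \prec t^N$ for every $s\in S$, centrality of $t$ propagates the inequality to $g \prec t^{|g|N}$ for all $g$ by induction on word length, so cofinality is determined by finitely many inequalities on generators. Granting openness of $\mathcal{U}$, the map $\Phi$ is a homeomorphism onto an open subset of $\LO(F_n\times\Z)$ and sends isolated points to isolated points. The forward direction of the theorem then follows immediately: if $n$ is even, the main circular-order theorem furnishes an isolated $c\in\CO(F_n)$, and $\Phi(c)$ is an isolated linear order on $F_n\times\Z$.

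For the converse, suppose $n$ is odd and let $\preceq$ be an isolated order on $F_n\times\Z$; after possibly reversing $\preceq$ assume $t\succ e$. If $\preceq\in\mathcal{U}$, then $\preceq = \Phi(c)$ for a necessarily isolated $c\in\CO(F_n)$, contradicting the main theorem for odd $n$. The remaining subcase, where $t$ is positive but not cofinal (so some $h\in F_n\times\Z$ satisfies $t^m\prec h$ for every $m\in\Z$), is the main obstacle. I would aim to show such a $\preceq$ admits arbitrarily small perturbations by exploiting the convex closure of $\langle t\rangle$ as a proper convex subset and the coset $h\langle t\rangle$ sitting above it, combined with the freeness of $F_n$ and McCleary's theorem that $\LO(F_n)$ has no isolated points, in order to construct a sequence of distinct left-invariant orders on $F_n\times\Z$ converging to $\preceq$. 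Making this perturbation rigorous while controlling only one-sided invariance on the coset structure is the key technical difficulty.
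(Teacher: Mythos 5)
Your reduction scheme is the same as the paper's in outline (even case by lifting an isolated circular order to $F_n\times\Z$; odd case split according to whether the central generator is cofinal), but the bridge you build between $\CO(F_n)$ and $\LO(F_n\times\Z)$ is overstated in a way that matters. The image of a lifting map $\Phi$ is \emph{not} the whole set $\mathcal U$ of orders with $t$ positive and cofinal, and the projection $\pi^*$ (sorting the representatives in $[id,t)$) is not injective: pulling back an order $\preceq\,\in\mathcal U$ by the automorphism of $F_n\times\Z$ fixing $t$ and sending a free generator $a$ to $at$ produces a different order in $\mathcal U$ inducing the same circular order on $F_n$ -- equivalently, the lift depends on a choice of section/splitting, and each fiber of $\pi^*$ is infinite. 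So ``$\Phi$ is a homeomorphism onto an open subset'' fails, and with it your one-line transfer of isolation in both directions. Your observation that $\mathcal U$ is open (cofinality of a \emph{central} element is detected by finitely many inequalities on generators) is correct, but what the argument actually needs, and what the paper invokes, is Proposition~\ref{LO_central} together with continuity and \emph{local} injectivity of $\pi^*$ on $\mathcal U$ (Prop.~5.4 of \cite{MR}); with that substitute, your even case and your cofinal-center subcase of the odd case go through as in the paper.

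The genuine gap is the subcase you defer: an isolated order on $F_n\times\Z$ whose center is not cofinal. This is Proposition~\ref{prop flexible}, and it is the technical heart of Theorem~\ref{LO_thm}; your sketch (convex closure of $\langle t\rangle$, the coset $h\langle t\rangle$, McCleary's theorem) does not contain the ideas that make it work. The paper passes to the dynamical realization $\rho$, takes $I$ to be the component of $\R\setminus\Fix(\rho(z))$ containing the basepoint and $C=\stab{G}{I}$, proves $C$ is convex with $z$ cofinal in $C$, uses finiteness of the convex chain (Lemma~\ref{l:chain}) to assume $C$ is the maximal convex subgroup, and writes $C=F^*\times\Z$. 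The key point is that the restricted order on $C$ is isolated with cofinal center, so Theorem~\ref{CO_thm} (the circular-order theorem is re-used \emph{inside} this case) together with Lemma~\ref{l:infinite_rank} forces $F^*$ to be nontrivial of finite rank; since $C$ has infinite index, $F^*$ cannot be normal in $F$, whence by Lemma~\ref{l:crossings} the action on $G/C$ has crossings. Crossings plus minimality of the collapsed action are then used to modify $\rho$ outside an arbitrary compact set without changing it there, yielding non-semi-conjugate nearby actions and hence, via Lemma~\ref{r:Flexible}, orders converging to $\preceq$. A purely order-theoretic perturbation quoting only McCleary's theorem for $F_n$ is not known to suffice here, and nothing in your sketch produces the required flexibility; as written, the converse direction for odd $n$ is unproved.
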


This result appears to give the first examples of any group $G$ with a finite index subgroup $H$ (in this case $F_n \times \Z \subset F_m \times \Z$, for $n$ odd and $m$ even) such that $\LO(G)$ and $\LO(H)$ are both infinite, but only $G$ contains isolated points.  

Theorem \ref{LO_thm} also has an interesting consequence regarding the \emph{space of marked groups}.   
As shown in \cite[Prop.~2.13]{champ-guira}, the set of left-orderable groups is a closed subset of the space of marked groups on $n$ generators.  However, Theorem \ref{LO_thm} implies that this is not the case either for the subset of groups admitting isolated linear orders (or its complement): one may take a sequence of markings of  $F_2 \times \Z$ so as to approach $F_3\times \Z$, and similarly, a sequence of markings on $F_3\times\Z$ can be chosen to approach $F_4\times \Z$ (see \cite[\S 2.4]{champ-guira}).  Thus, Theorem \ref{LO_thm} immediately gives the following. 

\begin{cor}\label{cor:marked}
In the space of finitely generated marked groups, having an isolated linear order is neither a closed, nor an open property.
\end{cor}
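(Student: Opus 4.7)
The plan is to deduce the corollary as a direct consequence of Theorem~\ref{LO_thm}, by exhibiting explicit convergent sequences in the space of finitely generated marked groups. The first and key reduction is to observe that admitting an isolated linear order depends only on the isomorphism class of the underlying abstract group, and not on the choice of marking. Hence the task reduces to producing two convergent sequences of marked groups on which Theorem~\ref{LO_thm} forces different values of this property between the approximating terms and the limit.

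For non-closedness I would invoke the construction of \cite[\S 2.4]{champ-guira} to build a sequence of markings of $F_2 \times \Z$ converging to $F_3 \times \Z$. The underlying mechanism is that $F_3$ is fully residually $F_2$: for any finite collection $T$ of non-trivial elements of $F_3$, there is a surjection $F_3 \twoheadrightarrow F_2$ that is injective on $T$. Extending such a surjection by the identity on a central $\Z$-factor, and letting $T$ exhaust the set of non-trivial words of a given length, produces a sequence of markings of $F_2\times \Z$ whose marked Cayley balls of any fixed radius eventually coincide with those of the natural marking on $F_3 \times \Z$. By Theorem~\ref{LO_thm}, every term of this sequence admits an isolated linear order while the limit does not, proving that the locus of marked groups with isolated linear orders is not closed.

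For non-openness the argument is symmetric: using that $F_4$ is fully residually $F_3$, one obtains a sequence of markings of $F_3 \times \Z$ converging to $F_4 \times \Z$. Theorem~\ref{LO_thm} places the approximating terms outside the ``isolated linear order'' locus and the limit inside it, so the complement of the locus is not closed, hence the locus itself is not open. No serious obstacle is anticipated: the convergence of the marked groups is a standard residual-freeness argument carried out in \cite{champ-guira}, and all the algebraic content of the corollary is delivered by Theorem~\ref{LO_thm} itself.
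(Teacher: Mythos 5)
Your proposal is correct and follows essentially the same route as the paper: the authors also deduce the corollary directly from Theorem~\ref{LO_thm} together with the convergences of markings of $F_2\times\Z$ to $F_3\times\Z$ and of $F_3\times\Z$ to $F_4\times\Z$ provided by \cite[\S 2.4]{champ-guira}. Your added explanation via full residual freeness (surjections $F_{n+1}\twoheadrightarrow F_n$ injective on finite sets, extended by the identity on the central $\Z$) is exactly the mechanism behind that citation, so there is nothing to correct.
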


The main tool for Theorem \ref{LO_thm}, and main focus of this work, is the study of \emph{circular orders} on $F_n$ and the dynamics of their corresponding actions of $F_n$ on $\T$.   It is well known that, for countable $G$, admitting a linear order is equivalent to acting faithfully by orientation-preserving homeomorphisms on the line.  In the same vein, a \emph{circular order} on $G$ is an algebraic condition which, for countable groups, is equivalent to acting faithfully by orientation-preserving homeomorphisms on $\T$.  We recall the definition and basic properties in Section \ref{CO_sec}.  
Any action of $G$ on $\T$ lifts to an action of a central extension of $G$ by $\Z$ on the line, giving us a way to pass between circular and linear orders on these groups, and giving us many dynamical tools for their study.  

Analogous to $\LO(G)$, one can define a \emph{space of circular orders} $\CO(G)$.  In \cite{MR}, the second and third authors showed that a circular order on $F_n$ is isolated if and only if the corresponding action on the circle has what they called \emph{ping-pong dynamics}.  They gave examples of isolated circular orders on free groups of even rank, but the odd rank case was left as an open problem.  Here we answer this question in the negative: 

\begin{thm}  \label{CO_thm}
$F_n$ admits an isolated circular order if and only if $n$ is even.  
\end{thm}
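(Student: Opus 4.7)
The plan is to use the equivalence established by the second and third authors in \cite{MR}: a circular order on $F_n$ is isolated if and only if the corresponding faithful action of $F_n$ on $\T$ has ping-pong dynamics, meaning the generators admit a system of source-sink arc pairs satisfying a prescribed combinatorial interlocking on $\T$. With this in hand, the theorem reduces to determining for which $n$ such a ping-pong action exists on the circle.

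For the \emph{even} case, I would exhibit explicit examples. The cleanest source is a Schottky-type faithful representation $F_{2k} \to \PSL(2,\R)$: the fundamental domain in $\H^2$ has $4k$ boundary arcs on $\T = \partial_\infty \H^2$, which one can arrange cyclically so that each generator pairs its arcs compatibly with the orientation of the circle. The criterion of \cite{MR} then yields isolated circular orders on $F_{2k}$. Such examples already appear in \cite{MR}, so the real work is in the odd case.

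The heart of the proof is to show that no faithful ping-pong action of $F_n$ on $\T$ exists when $n$ is odd. My strategy would be a parity/orientation obstruction: the ping-pong configuration yields $2n$ paired arcs around $\T$, and tracking how each generator must identify the complement of its repelling arc with its attracting arc while preserving the cyclic orientation of $\T$ determines a combinatorial sign invariant. I would aim to compute this invariant directly from the cyclic word describing the arc arrangement and show that its value is controlled by $(-1)^n$, forcing a contradiction when $n$ is odd. An equivalent formulation encodes the dynamics as an expanding piecewise-homeomorphic Markov map $T \colon \T \to \T$ and extracts the invariant from the orientation behavior of $T$ on its atoms, in parallel with the analysis of \cite{markov-partition-vfree}.

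The main obstacle will be formalizing this invariant with enough generality to rule out \emph{every} possible combinatorial type of ping-pong configuration on $\T$, not just ``standard'' Schottky-like arrangements. One must check that reorderings of the arcs, or more exotic configurations where generators may act with several source-sink regions, do not evade the obstruction. Once the invariant is extracted in sufficient generality, the obstruction applies uniformly to all odd $n$, and combined with the explicit examples of the even case, the theorem follows.
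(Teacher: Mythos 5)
Your even case matches the paper, but the odd case rests on a claim that is false as stated: you propose to show that \emph{no} faithful ping-pong action of $F_n$ on $\T$ exists when $n$ is odd. Such actions exist for every $n\ge 2$: take any discrete Schottky subgroup of $\PSL(2,\R)$ freely generated by $n$ hyperbolic elements with pairwise disjoint attracting/repelling intervals (or the elementary construction of Example~\ref{easy_ex}, which works for any rank and any choice of the integers $k(a)$). Consequently no ``sign invariant controlled by $(-1)^n$'' can rule out all ping-pong configurations in odd rank, and the heart of your proposed argument cannot be carried out. The criterion of \cite{MR} is also slightly stronger than you state: a circular order is isolated if and only if its \emph{dynamical realization} is a ping-pong action (satisfying the closure convention), and dynamical realizations carry an extra dynamical constraint that ordinary ping-pong actions need not satisfy.

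That extra constraint is exactly the missing ingredient. By \cite{MR} (Lemma~3.21 and Cor.~3.24, quoted here as Lemma~\ref{transitive_lem}), a dynamical realization acts \emph{transitively} on the connected components of the complement of its minimal invariant Cantor set $\Lambda(\rho)$. So the obstruction for odd $n$ is not the non-existence of ping-pong actions, but the impossibility of a ping-pong action of $F_n$ having a single orbit of gaps. The paper proves this by a parity count: from the combinatorics of each generator (the cycle structure of the graphs $\mathsf{\Gamma}_a$ of Definition~\ref{d:graph}) one glues a $4k(a)$-gon to the disc bounded by $\T$ for each generator, obtaining an orientable surface $\Sigma$ whose boundary components are in bijection with the orbits of gaps not contained in the domains, and whose Euler characteristic is $\equiv n+1 \pmod 2$; since for an orientable surface $\chi$ agrees mod $2$ with the number of boundary components, the number of orbits of gaps is $\equiv n+1 \pmod 2$ and so cannot equal $1$ when $n$ is odd. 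Your $(-1)^n$ intuition is in the right spirit, but it must be attached to the number of orbits of gaps of $\Lambda(\rho)$ (together with the transitivity property special to dynamical realizations), not to the existence of ping-pong configurations, and your proposal contains no mechanism for counting these orbits.
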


Similarly to Corollary~\ref{cor:marked}, one can also prove that the set of groups admitting isolated circular orders is neither closed nor open in the space of marked groups.

We prove Theorem \ref{CO_thm} by developing a combinatorial tool for the study of actions on $\T$ with ping-pong dynamics (similar to actions admitting Markov partitions), inspired by the work in \cite{DKN2014} and \cite{markov-partition-vfree}.  We expect these to have applications beyond the study of linear and circular orders; one such statement is given in Theorem~\ref{t:gen_ping-pong}.  
The notion of ping-pong dynamics is defined and motivated in the next section.  Sections \ref{CO_sec} and \ref{LO_sec} give the application to the study of circular and linear orders, respectively, and the proofs of Theorems \ref{LO_thm} and \ref{CO_thm}.

\section{Ping-pong actions and configurations}
\label{s:PingPong}

\begin{dfn} \label{pp_action_def}
Let $G=F_n$ be the free group of rank $n$, freely generated by $S=\{ a_1,\ldots,a_n\}$.   A \emph{ping-pong action} of $(G, S)$ on $\T$ is a representation $\rho:G\to \Homeo_+(\T)$ such that there exist pairwise disjoint open sets $D(a)\subset \T$, $a\in S\cup S^{-1}$, each of which has finitely many connected components, and such that $\rho(a) \left (\T \setminus D(a^{-1})\right ) \subset \overline{D(a)}$.    We further assume that if $I$ and $J$ are any connected components of $D(a)$, then $\bar{I} \cap \bar{J} = \emptyset$.

We call the sets $D(a)$ the \emph{ping-pong domains} for $\rho$.  
\end{dfn}
A similar definition is given in \cite{MR}, with the additional requirement that ping-pong domains be closed.  The above, more general definition is more natural for our purposes, although we will later introduce Convention~\ref{Closures} to reconcile the two.  
The reader may notice that, for a given ping-pong action $\rho$ of $(G, S)$, there can be many choices of sets $D(a)$ satisfying the property in Definition \ref{pp_action_def}.    For instance, if $\rho$ is a ping-pong action such that $\bigcup_{a \in S \cup S^{-1}} \overline{D(a)} \neq \T$, then one may choose an arbitrary open set $I$ disjoint from $\bigcup_{a \in S \cup S^{-1}} \overline{D(a)}$ and replace $D(a_1)$ with $D(a_1) \cup I$, leaving the other domains unchanged.  These new domains still satisfy $\rho(a) \left (\T \setminus D(a^{-1})\right ) \subset \overline{D(a)}$.     Later we will adopt a convention to avoid this kind of ambiguity.

\paragraph{Motivation: why ping-pong actions?} The classical ping-pong lemma implies that ping-pong actions are always faithful, and a little more work shows that the action is determined up to semi-conjugacy by a finite amount of combinatorial data coming from the cyclic ordering and the images of the connected components of the sets $D(a)$ (see Definition \ref{pp_config_def} and Lemma \ref{config_determines} below, or \cite[Thm.~4.7]{BP}).  
In particular, one can think of ping-pong actions as the family of ``simplest possible'' faithful actions of $F_n$ on $\T$, and it is very easy to produce a diverse array of examples.    Perhaps the best-known examples are the actions of discrete, free subgroups of $\PSL(2,\R)$  on $\R \mP^1$.  For these actions, one can choose domains $D(a)$ with a single connected component.  Figure \ref{PP-0_fig} shows an example of the dynamics of such an action of $F_2 = \langle a, b \rangle$.   

\begin{figure}
\[
\includegraphics[scale=1]{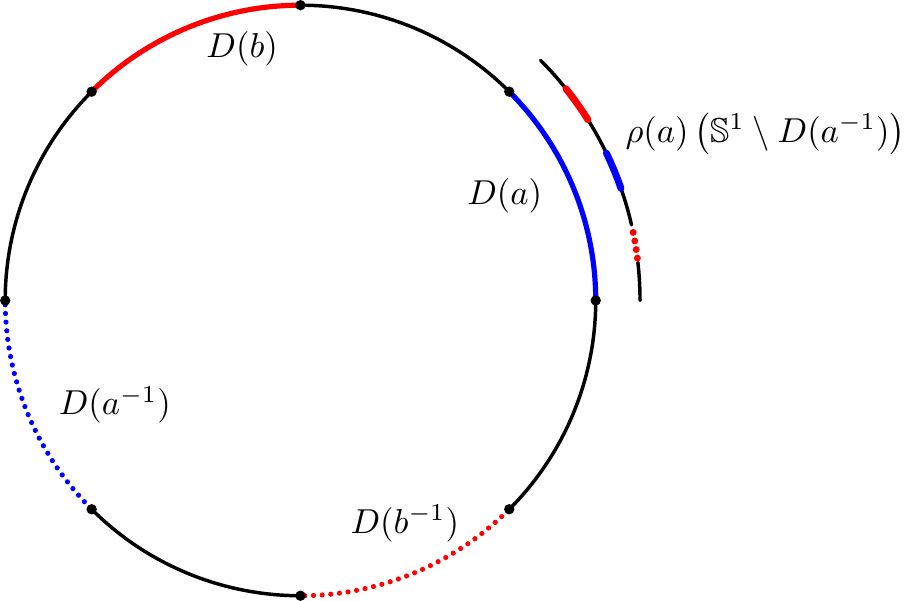}
\]
\caption{Classical ping-pong on two generators}\label{PP-0_fig}
\end{figure}

Despite their simplicity, ping-pong actions are quite useful.  For instance, in \cite{markov-partition-vfree} ping-pong actions were used to construct the first known examples of discrete groups of real-analytic circle diffeomorphisms acting minimally, but not conjugate to a subgroup of a finite central extension of $\PSL(2,\R)$. 
This was a by-product of a series of papers concerning longstanding open conjectures of Hector, Ghys and Sullivan on the relationship between minimality and ergodicity of a codimension-one foliation (see for instance \cite{DKN2009,FK2012,DKN2014}).   In general, it is quite tractable to study the dynamic and ergodic properties of a ping-pong action (or a Markov system), and this program has been carried out by many authors \cite{CC1,CC2,IM,Matsumoto,mane}.

\subsection{Basic properties}

\begin{lem}  \label{reduced_lem}
Given a ping-pong action of $(G, S)$, there exists a choice of ping-pong domains $D(a)$ such that $\rho(a)\left (\T \setminus \overline{D(a^{-1})}\right ) = D(a)$ holds for all $a \in S \cup S^{-1}$.  
\end{lem}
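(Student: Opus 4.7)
The plan is to modify the given ping-pong domains to produce new ones satisfying the stronger equality. The starting observation is that applying the ping-pong inclusion with $a$ and $a^{-1}$ interchanged, namely $\rho(a^{-1})(\T \setminus D(a)) \subset \overline{D(a^{-1})}$, and then pushing forward by $\rho(a)$, yields the strengthening
\[
\rho(a)\bigl(\T \setminus \overline{D(a^{-1})}\bigr) \subset D(a) \quad \text{for every } a \in S \cup S^{-1}.
\]
This suggests defining new domains $\widetilde D(a) := \rho(a)(\T \setminus \overline{D(a^{-1})})$; but performing this replacement for every generator simultaneously does not produce the desired equality. The trick is to modify only one element of each pair $\{a, a^{-1}\}$.

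Concretely, fix the set of positive generators $S = \{a_1, \ldots, a_n\}$, and set
\[
D'(a_i) := \rho(a_i)\bigl(\T \setminus \overline{D(a_i^{-1})}\bigr), \qquad D'(a_i^{-1}) := D(a_i^{-1}).
\]
I would then verify that the family $\{D'(a) \mid a \in S \cup S^{-1}\}$ satisfies Definition~\ref{pp_action_def} and the claimed equality. Openness is clear; pairwise disjointness follows from $D'(a_i) \subset D(a_i)$ (the preliminary observation) combined with the original disjointness. For the disjoint-closures condition on the components of $D'(a_i)$: since the components of $D(a_i^{-1})$ have pairwise disjoint closures, $\overline{D(a_i^{-1})}$ is a finite disjoint union of nondegenerate closed arcs, so $\T \setminus \overline{D(a_i^{-1})}$ decomposes into a matching finite number of open arcs whose closures are the ``gap'' arcs between consecutive components; these gaps have pairwise disjoint closures, since they are separated by nondegenerate closed arcs. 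The homeomorphism $\rho(a_i)$ preserves this structure.

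For the equality $\rho(a)(\T \setminus \overline{D'(a^{-1})}) = D'(a)$: when $a = a_i \in S$, this is immediate from $D'(a_i^{-1}) = D(a_i^{-1})$ together with the definition of $D'(a_i)$. When $a = a_i^{-1}$, the key computation is
\[
\overline{D'(a_i)} = \rho(a_i)\bigl(\overline{\T \setminus \overline{D(a_i^{-1})}}\bigr) = \rho(a_i)\bigl(\T \setminus D(a_i^{-1})\bigr),
\]
where the last equality uses the regularity identity $\operatorname{int}(\overline{D(a_i^{-1})}) = D(a_i^{-1})$, itself a consequence of the disjoint-closures assumption. Passing to complements in $\T$ and applying $\rho(a_i^{-1})$ then yields $\rho(a_i^{-1})(\T \setminus \overline{D'(a_i)}) = D(a_i^{-1}) = D'(a_i^{-1})$, as required.

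The main obstacle is noticing that the modification must be performed asymmetrically; a naive symmetric modification only produces the one-sided inclusion and not equality. Once the asymmetry is identified, the rest is routine bookkeeping exploiting the regularity of open subsets of $\T$ whose components have pairwise disjoint closures.
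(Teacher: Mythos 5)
Your proof is correct and follows essentially the same route as the paper: shrink $D(a)$ only for the positive generators by setting $D'(a) := \rho(a)\left(\T \setminus \overline{D(a^{-1})}\right)$, keep $D(a^{-1})$, and use the regularity $\operatorname{int}\left(\overline{D(a^{-1})}\right) = D(a^{-1})$ coming from the disjoint-closures hypothesis to get the equality for the inverse generators. The extra bookkeeping you include (the inclusion $D'(a)\subset D(a)$ and the disjointness checks) is a fine elaboration of steps the paper leaves implicit.
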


\begin{proof}
Let $\rho$ be a ping-pong action with sets $D(a)$ given.  We will modify these domains to satisfy the requirements of the lemma.  For each generator $a \in S$ (recall this is the free, \emph{not} the symmetric, generating set), we shrink the domain $D(a)$, setting 
$D'(a) :=  \rho(a)\left (\T \setminus \overline{D(a^{-1})}\right ).$
Applying $a^{-1}$ to both sides of the above expression gives $\rho(a^{-1})(D'(a)) = \T \setminus \overline{D(a^{-1})}$. 
Moreover, since the connected components of $D(a)$ have disjoint closures, and the same holds for $D(a^{-1})$, hence also for $D'(a)$, we also have $\rho(a^{-1})\left (\overline{D'(a)}\right ) = \T \setminus D(a^{-1})$; or equivalently 
$\rho(a^{-1})\left (\T \setminus \overline{D'(a)}\right ) = D(a^{-1})$.
This is what we needed to show.
\end{proof} 

\begin{con}  \label{reduced_con}
From now on, we assume all choices of domains $D(a)$ for every ping-pong action are as in Lemma \ref{reduced_lem}.  In particular, this means that, for each $a \in S$, the sets of connected components $\pi_0(D(a))$ and $\pi_0(D(a^{-1}))$ have the same cardinality, and $\rho(a)$ induces a bijection between the connected components of $\T \setminus  \overline{D(a^{-1})}$ and connected components of $D(a)$.  
\end{con}

\begin{dfn} \label{pp_config_def}
Let $\rho$ be a  ping-pong action of $(G, S)$.  The \emph{ping-pong configuration} of $\rho$ is the data consisting of
\begin{enumerate}
\item the cyclic order of the connected components of $\bigcup_{a\in S\cup S^{-1}}D(a)$ in $\T$, and 
\item for each $a\in S\cup S^{-1}$, the assignment of connected components
\[
\lambda_a:\pi_0\left (\bigcup_{b\in S\cup S^{-1}\setminus \{a^{-1}\}}D(b)\right )\to \pi_0\left (D(a)\right )
\]
induced by the action.  
\end{enumerate}
\end{dfn}

Note that {\em not} every abstract assignment $\lambda_a$ as in the definition above can be realized by an action $F_2 \to \Homeo_+(\T)$.   The following construction gives one way to produce some large families of examples. 

\begin{ex}[An easy construction of ping-pong actions] \label{easy_ex}
For $a \in S$, let $X_a$ and $Y_a \subset \T$ be disjoint sets each of cardinality $k(a)$, for some integer $k(a) \geq 1$, such that every two points of $X_a$ are separated by exactly one point of $Y_a$.  Choose these so  that all the sets $X_a \cup Y_a$ are pairwise disjoint as $a$ ranges over $S$.   Let $D(a)$ and $D(a^{-1})$ be neighborhoods of $X_a$ and $Y_a$, respectively, chosen small enough so that all these sets remain pairwise disjoint.   Now one can easily construct a piecewise linear homeomorphism (or even a smooth diffeomorphism) $\rho(a)$ with $X_a$ as its set of attracting periodic points, and $Y_a$ as the set of repelling periodic points such that $\rho(a)\left (\T \setminus D(a^{-1})\right ) = D(a)$.     The assignments $\lambda_a$ are now dictated by the period of $\rho(a)$ and the cyclic order of the sets $X_a$ and $Y_a$.  
\end{ex}

While the reader should keep the construction above in mind as a source of examples, we will show in Example \ref{ex:strange} that not every ping-pong configuration can be obtained in this manner.
However, the regularity (PL or smooth) in the construction is attainable in general.   The following construction gives one possibility for a PL realization that will be useful later in the text.  We leave the modifications for the smooth case as an easy exercise.

\begin{lem}\label{GoodRealization}
Given a ping-pong action $\rho_0$ of $(G,S)$ with domains $\{D_0(a)\}_{a\in S\cup S^{-1}}$ following Convention \ref{reduced_con}, one can find another ping-pong action $\rho$ of $(G,S)$ with domains $\{D(a)\}_{a\in S\cup S^{-1}}$such that:
\begin{enumerate}
\item the action $\rho$ is piecewise linear,  
\item there exists $\mu>1$ such that for any $a\in S\cup S^{- 1}$, one has $\rho(a)'\vert_{D(a^{-1})}\ge \mu$, and 
\item the actions $\rho_0$ and $\rho$ have the same ping-pong configuration.
\end{enumerate}
\end{lem}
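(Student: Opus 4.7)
The plan is to build the PL ping-pong action by directly realizing the combinatorial data of $\rho_0$.  Fix an expansion constant $\mu > 1$, say $\mu = 2$.  Let $N$ be the total number of connected components of $\bigcup_{a\in S\cup S^{-1}} D_0(a)$, and choose $\epsilon > 0$ satisfying $\epsilon < 2\pi/(N(\mu+1))$.  Place $N$ open arcs of length $\epsilon$ evenly around $\T$ in the cyclic order dictated by the configuration $\mathcal C$ of $\rho_0$, and label them so as to define sets $D(a)$ for $a \in S \cup S^{-1}$ matching $\mathcal C$.  All complementary gaps then have the same length $g := (2\pi - N\epsilon)/N$, and our choice of $\epsilon$ guarantees $g > \mu \epsilon$.

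For each generator $a \in S$, I construct $\rho(a)$ piecewise.  Write $U_1, \ldots, U_k$ for the components of $D(a^{-1})$, $W_1, \ldots, W_k$ for those of $\T \setminus \overline{D(a^{-1})}$, and $V_j, X_j$ analogously on the target side.  The configuration $\mathcal C$ determines permutations $\sigma, \tau$: $\sigma(i)$ is read off by applying $\lambda_a$ to any component of some $D(b)$, $b \neq a^{-1}$, lying in $W_i$ (at least one such component exists for some $i$ because $D(a) \subset \bigcup_j W_j$), and then $\tau$ is forced by adjacency and orientation-preservation.  On each $U_i$, define $\rho(a)|_{U_i}$ to be the orientation-preserving affine bijection onto $X_{\tau(i)}$; its slope equals $|X_{\tau(i)}|/\epsilon \geq g/\epsilon > \mu$, which gives condition (2).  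On each $W_i$, let $\rho(a)|_{W_i}$ be any orientation-preserving PL homeomorphism onto $V_{\sigma(i)}$ that sends each component of $\bigcup_{b \neq a^{-1}} D(b)$ lying in $W_i$ into $V_{\sigma(i)}$ as a subarc, in the correct cyclic order; such a map exists because the target subarcs can be placed freely inside $V_{\sigma(i)}$ (which contains no other labeled subintervals).

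Orientation-preservation forces the two partial definitions to agree at the endpoints of each $U_i$: a common boundary point of $U_i$ and $W_{i-1}$ must be sent to a common boundary point of $\overline{X_{\tau(i)}}$ and $\overline{V_{\sigma(i-1)}}$, which by our definition of $\sigma, \tau$ is exactly what happens.  Thus $\rho(a)$ extends to a PL self-homeomorphism of $\T$.  By construction, $\rho$ is a ping-pong action with domains $\{D(a)\}$; its configuration matches $\mathcal C$ (the cyclic order is built in, and the $\lambda_a$ data agrees via the defining formula for $\sigma$); and $\rho(a)'\vert_{D(a^{-1})} = |X_{\tau(i)}|/\epsilon \geq \mu$ on each $U_i$, yielding all three conclusions.

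The main (and essentially only) subtlety is the combinatorial bookkeeping of the permutations $\sigma$ and $\tau$ and verifying the boundary matching; the geometric content reduces to the elementary observation that, once $\epsilon$ is small, affine expansion of each small arc $U_i$ onto the large gap $X_{\tau(i)}$ automatically has derivative bounded below by $\mu$, while the ``contracting'' direction $W_i \to V_{\sigma(i)}$ is unconstrained beyond the prescribed cyclic ordering and so is easily realized by a PL homeomorphism.
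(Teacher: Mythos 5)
Your construction only verifies conclusion (2) for the positive letters, and as written it can fail for the inverse letters. For $a\in S$ you make $\rho(a)$ affine on each component $U_i$ of $D(a^{-1})$ with slope $|X_{\tau(i)}|/\epsilon\ge g/\epsilon>\mu$, which is fine; but the lemma demands the bound for \emph{every} $a\in S\cup S^{-1}$, and for the letter $a^{-1}$ it reads $\rho(a^{-1})'\vert_{D(a)}\ge\mu$, i.e.\ $(\rho(a)^{-1})'\ge\mu$ on each $V_j$. Since $\rho(a)(W_i)=V_{\sigma(i)}$, this is equivalent to $\rho(a)'\le 1/\mu$ at every point of every gap $W_i$. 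You take $\rho(a)\vert_{W_i}$ to be \emph{any} orientation-preserving PL homeomorphism onto $V_{\sigma(i)}$, and such a map may well have slope larger than $1/\mu$ on some subinterval, so the uniform expansion of $\rho(a^{-1})$ on $D(a)$ is not guaranteed. The repair is immediate: since $|V_{\sigma(i)}|=\epsilon$ and $|W_i|\ge g>\mu\epsilon$, take $\rho(a)\vert_{W_i}$ affine (or any PL map with all slopes $\le 1/\mu$); note that the extra requirement you impose on these maps (sending the labelled subarcs of $W_i$ into $V_{\sigma(i)}$ in the correct cyclic order) is automatic for \emph{any} orientation-preserving homeomorphism $W_i\to V_{\sigma(i)}$, so nothing is lost by this choice. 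A second, smaller point: your definition of $\sigma$ via $\lambda_a$ only makes sense on gaps $W_i$ that actually contain a component of some $D(b)$, $b\neq a^{-1}$, and such empty gaps can occur; you must pin $\sigma$ down on one non-empty gap (which exists because $D(a)\subset\bigcup_j W_j$) and propagate to all gaps and to $\tau$ by adjacency around the circle, with consistency of the propagated values against the remaining $\lambda_a$-data inherited from the homeomorphism $\rho_0(a)$. Your parenthetical gestures at this, but the argument should be made explicit.

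Once these points are fixed, your argument is a valid but genuinely different route from the paper's. The paper stays on the original circle: it shrinks each $D_0(a)$ to a domain $D(a)\subset D_0(a)$ with one component inside each old component, each component at most half the length of the smallest component of $\T\setminus D(a^{-1})$, and lets $\rho(a)$ send the components of $\T\setminus D(a^{-1})$ linearly onto those of $D(a)$ following $\lambda_a$; preservation of the configuration is then immediate and both expansion bounds come out with $\mu=2$ without any bookkeeping. Your evenly-spaced model buys explicit control of all lengths (every domain has length $\epsilon$, every gap at least $g$), at the cost of having to reconstruct the gluing permutations $\sigma,\tau$ from the combinatorial data, which is exactly where the care described above is needed.
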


\begin{proof}
Let $\rho_0$ be as in the statement of the Lemma.  For each $a \in S \cup S^{-1}$, replace the original domains by smaller domains $D(a) \subset D_0(a)$, chosen small enough so that the largest connected component of $D(a)$ is at most half the length of the smallest connected component of $\T \setminus D(a^{-1})$.  We require also that $D(a)$ has exactly one connected component in each connected component of $D_0(a)$.  
Now define $\rho(a)$ as a piecewise linear homeomorphism that maps connected components of $\T\setminus D(a^{-1})$ onto connected components of $D(a)$ linearly following the assignment $\lambda_a$.  
\end{proof}

The next definition and proposition give a further means of encoding the combinatorial data of a ping-pong action.  This will be used later in the proof of Theorem \ref{CO_thm}.

\begin{dfn}\label{d:graph}
Let $\rho$ be a ping-pong action of $(G, S)$ with domains $D(a)$. 
For each $a\in S$, we define an oriented bipartite graph $\mathsf{\Gamma}_a$ with vertex set equal to $\pi_0(D(a)) \cup \pi_0(D(a^{-1}))$, and edges defined as follows:
\begin{itemize}
\item For $I^+\in \pi_0(D(a))$, let $J^+$ denote the connected component of $\T\setminus D(a)$ adjacent to $I^+$ on the right.  Put an oriented edge from $I^+$ to an interval $I^- \in \pi_0(D(a^{-1}))$ if and only if $\rho(a^{-1})(J^+)= \overline{I^-}$. 
\item Similarly, for $I^- \in \pi_0(D(a))$, with $J^-$ the adjacent interval of  $\T\setminus D(a^{-1})$ on the right, put an oriented edge from $I^-\in \pi_0(D(a^{-1}))$ to $I^+\in \pi_0(D(a))$ if and only if $\rho(a)(J^-)= \overline{I^+}$.
\end{itemize}
\end{dfn}

\begin{prop}\label{p:graph}
Let $\rho:G\to \Homeo_+(\T)$ be a ping-pong action of a free group $(G,S)$.  Then, for each generator $a\in S$, there exists $k(a) \in \N$ such that the graph $\mathsf\Gamma_a$ is an oriented $2k(a)$-cycle.
\end{prop}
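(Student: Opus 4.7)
The approach is to verify that $\mathsf{\Gamma}_a$ is a $2$-regular bipartite directed graph (hence a disjoint union of even-length cycles) and then use orientation-preservation of $\rho(a)$ to force it to be connected. Convention~\ref{reduced_con} gives a bijection between components of $\T \setminus \overline{D(a^{-1})}$ and components of $D(a)$, together with its dual for $\rho(a^{-1})$. In particular $|\pi_0(D(a))| = |\pi_0(D(a^{-1}))|$; call this common value $k(a)$. Inspecting Definition~\ref{d:graph}, each vertex has out-degree exactly one. For in-degree, the bijection produced by Convention~\ref{reduced_con} supplies, for each $I^- \in \pi_0(D(a^{-1}))$, a unique $J^+ \in \pi_0(\T \setminus D(a))$ with $\rho(a^{-1})(J^+) = \overline{I^-}$, and hence a unique $I^+ \in \pi_0(D(a))$ with $I^+ \to I^-$; symmetrically on the other side. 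Thus $\mathsf{\Gamma}_a$ is a disjoint union of directed cycles, and bipartiteness of the graph forces each cycle to have even length.

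For connectivity, label $\pi_0(D(a)) = \{I^+_1, \ldots, I^+_{k(a)}\}$ in positive cyclic order on $\T$, with $J^+_i$ the component of $\T \setminus D(a)$ immediately to the right of $I^+_i$; use analogous labels $I^-_j, J^-_j$ on the inverse side. Define permutations $\pi, \sigma$ of $\Z/k(a)\Z$ by $\rho(a^{-1})(J^+_i) = \overline{I^-_{\pi(i)}}$ and $\rho(a)(J^-_j) = \overline{I^+_{\sigma(j)}}$. Starting at $I^+_i$ and walking two consecutive edges in $\mathsf{\Gamma}_a$ lands at $I^+_{\sigma(\pi(i))}$, so the graph is a single $2k(a)$-cycle if and only if $\sigma \circ \pi$ is a $k(a)$-cycle. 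Since $\rho(a)$ is orientation-preserving, the cyclic sequence $\ldots, I^-_j, J^-_j, I^-_{j+1}, J^-_{j+1}, \ldots$ maps to the cyclic sequence $\ldots, \overline{J^+_{\pi^{-1}(j)}}, \overline{I^+_{\sigma(j)}}, \overline{J^+_{\pi^{-1}(j+1)}}, \overline{I^+_{\sigma(j+1)}}, \ldots$, which must agree with the natural cyclic order $\ldots, J^+_m, I^+_{m+1}, J^+_{m+1}, I^+_{m+2}, \ldots$ on the target side. Reading off immediate successors yields $\sigma(j) = \pi^{-1}(j+1)$, equivalently $\pi \circ \sigma$ is the cyclic shift $j \mapsto j+1$ on $\Z/k(a)\Z$. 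Hence $\pi\sigma$ is a $k(a)$-cycle, and so is its conjugate $\sigma\pi = \sigma(\pi\sigma)\sigma^{-1}$, completing the argument.

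The main obstacle is the connectivity step: a priori $\mathsf{\Gamma}_a$ could split into several disjoint even cycles, and ruling this out amounts to extracting the shift identity $\pi \sigma (j) = j + 1$ from the global cyclic order on $\T$. This is precisely where orientation-preservation of $\rho(a)$ enters in an essential way. The remaining ingredients — the vertex degree count and bipartite parity — are formal consequences of Convention~\ref{reduced_con} and Definition~\ref{d:graph}, and require only local data about each domain.
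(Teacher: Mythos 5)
Your proof is correct and follows essentially the same route as the paper: first using Convention~\ref{reduced_con} to see that every vertex of $\mathsf{\Gamma}_a$ has in- and out-degree one (so the graph is a disjoint union of even cycles), and then using orientation-preservation to force connectivity. Your permutation bookkeeping with $\pi$, $\sigma$ and the shift identity is just a global restatement of the paper's local argument that consecutive components of $D(a)$ (together with the intermediate component of $D(a^{-1})$) are consecutive vertices of the same cycle, since $\rho(a)$ carries the adjacent pair $\overline{I^-}$, $J^-$ to an adjacent gap--component pair of $D(a)$.
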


\begin{proof}
First, the construction of the graph ensures that it is bipartite, and that each vertex has at most one outgoing edge. 
As a consequence of Convention~\ref{reduced_con}, for any $s\in S\cup S^{-1}$ and connected component $J$ of $\T\setminus D(s^{-1})$, there exists $I\in \pi_0(D(s))$ such that $\rho(s)(J)= \overline{I}$, so each vertex does indeed have an outgoing edge.  
Moreover, if $J' \neq J$ is a different connected component, then $\rho(s)(J) \cap \rho(s)(J') = \emptyset$, so each vertex $I$ has a unique incoming edge.   This shows that $\mathsf \Gamma_a$ is a union of disjoint cycles, and it remains only to prove that the graph is connected.  

To show connectivity, 
let $I^{-}$ be a connected component of $D(a^{-1})$ and consider 
the connected component $J^+$ of $\T\setminus D(a)$ such that $\rho(a^{-1})(J^+)=\overline{I^-}$.
Let $I^+_1$ and $I^+_2$ be the connected components of $D(a)$ (possibly the same) which are adjacent to $J^+$ on either side.  
By definition of the graph $\mathsf{\Gamma}_a$, the intervals $I^+_1,I^-,I^+_2$ are consecutive vertices in the same cycle of the graph. And vice versa: if three intervals  $I^+_1,I^-,I^+_2$ are consecutive vertices, then $J^+:=\rho(a)(\overline{I^-})$ is the connected component of $\T\setminus D(a)$ adjacent to both $I^+_1$ and $I^+_2$.

This proves that if $I^+_1$ and $I^+_2$ are consecutive connected components of $D(a)$ in $\T$, then they belong to the same cycle in $\mathsf\Gamma_a$. Hence we easily deduce that \emph{all} connected components of $D(a)$ are in the same cycle in $\mathsf\Gamma_a$.  The same also holds for the components of $D(a^{-1})$, and the graph is connected.  
\end{proof} 


\section{Left-invariant circular orders}  \label{CO_sec}

We begin by quickly recalling standard definitions and properties.  A reader familiar with circular orders may skip to Section \ref{dynam_sec}.  

\begin{dfn}\label{dfn:CO}
Let $G$ be a group. A \emph{left-invariant circular order} is a function
$c:G\times G\times G\to \{0,\pm 1\}$
such that
\begin{enumerate}
\item $c$ is \emph{homogeneous}: $c(\gamma g_0,\gamma g_1,\gamma g_2)=c(g_0,g_1,g_2)$ for any $\gamma,g_0,g_1,g_2\in G$;
\item $c$ is a \emph{$2$-cocycle} on $G$:
\[
c(g_1,g_2,g_3)-c(g_0,g_2,g_3)+c(g_0,g_1,g_3)-c(g_0,g_1,g_2)=0\quad\text{for any }g_0,g_1,g_2,g_3\in G;
\]
\item\label{cond:3} $c$ is \emph{non-degenerate}: $c(g_0,g_1,g_2)=0$ if and only if $g_i=g_j$ for some $i\neq j$.
\end{enumerate}
The \emph{space} of all left-invariant circular orders on $G$, denoted $\CO(G)$, is the set of all such functions,
endowed with the subset topology from $\{0,\pm 1\}^{G\times G\times G}$ (with the natural product topology).
\end{dfn}

Although spaces of left-invariant linear orders have been well-studied, there are very few cases where the topology of $\CO(G)$ is completely understood.   Other than a few sporadic examples, the only complete description of spaces of circular orders known to the authors comes from \cite{CMR}, which gives a classification of all groups such that $\CO(G)$ is finite, and also a proof that $\CO(A)$ is homeomorphic to a Cantor set for any Abelian group $A$.
 
Given that left-orders on free groups are well understood, a natural next case of circular orders to study is $\CO(F_n)$.  
Our main tool for this purpose is the following classical relationship between circular orders and actions on~$\T$ (see~\cite{Calegari, MR}).  

\begin{prop}\label{p:realization}
Given a left-invariant circular order $c$ on a countable group $G$, there is an action $\rho_c:G\to \Homeo_+(\T)$ such that $c(g_0,g_1,g_2) = \ord\left (\rho_c(g_0)(x), \rho_c(g_1)(x), \rho_c(g_2)(x)\right )$ for some $x \in \T$, where $\ord$ denotes cyclic orientation.   

Moreover, there is a canonical procedure for producing $\rho_c$ which gives a well-defined conjugacy class of action.  This conjugacy class is called the \emph{dynamical realization of $c$ with basepoint $x$}. 
\end{prop}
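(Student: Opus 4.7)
The plan is to build $\rho_c$ by first realizing $G$ as a cyclically ordered subset of $\T$ compatible with $c$, defining the action there by left multiplication transported through this embedding, and then extending to a homeomorphism of $\T$.

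First I would construct an injection $\iota:G\to\T$ with $\ord(\iota(g_0),\iota(g_1),\iota(g_2))=c(g_0,g_1,g_2)$ for all distinct triples. Fix an enumeration $G=\{\g_1,\g_2,\ldots\}$, place $\iota(\g_1)$ at an arbitrary basepoint, and inductively place $\iota(\g_{n+1})$ in the open arc of $\T\setminus\{\iota(\g_1),\ldots,\iota(\g_n)\}$ dictated by $c$ (e.g.\ at the midpoint of that arc). The cocycle identity and homogeneity ensure that the values $c(\g_i,\g_j,\g_{n+1})$ for $i,j\le n$ are consistent with the already-placed cyclic order, so such an arc exists and is unique once $n\ge 2$; non-degeneracy ensures $\iota$ is injective. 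By left-invariance of $c$, left multiplication on $G$ corresponds under $\iota$ to an order-preserving self-map of $\iota(G)$, so setting $\rho_c(g)(\iota(h)):=\iota(gh)$ defines an action of $G$ on $\iota(G)$ by cyclic-order-preserving bijections, and the homomorphism property $\rho_c(g_1g_2)=\rho_c(g_1)\circ\rho_c(g_2)$ is immediate.

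Next I would extend each $\rho_c(g)$ to $\T$. An order-preserving bijection of a cyclically ordered set extends uniquely and continuously to the closure, because one-sided limits map to one-sided limits; this gives a continuous $G$-action on the closed invariant set $K:=\overline{\iota(G)}$. On each connected component of $\T\setminus K$, extend $\rho_c(g)$ by the unique orientation-preserving affine homeomorphism onto its image interval. The resulting $\rho_c(g)$ is a homeomorphism of $\T$, and the formula continues to define a homomorphism since the extension is uniquely determined by its values on $K$. Taking $x:=\iota(e)$ yields the displayed identity $c(g_0,g_1,g_2)=\ord(\rho_c(g_0)x,\rho_c(g_1)x,\rho_c(g_2)x)$ directly from the defining property of $\iota$.

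For the well-definedness up to conjugacy, I would compare two embeddings $\iota_1,\iota_2$ arising from different enumerations (or midpoint conventions). The map $\iota_2\circ\iota_1^{-1}:\iota_1(G)\to\iota_2(G)$ is an order-preserving bijection, which extends by the same one-sided-limit argument to an order-preserving homeomorphism of the closures $K_1\to K_2$, and then to a $\vf\in\Homeo_+(\T)$ by choosing any orientation-preserving homeomorphism between corresponding gaps. By construction $\vf$ intertwines the two actions on $K_1$, and since the extensions across gaps are determined on the boundary, $\vf$ conjugates one action to the other globally.

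The main obstacle I expect is the extension step: verifying that the action on $\iota(G)$ really does extend continuously to $K$ (which reduces to showing the map respects left and right limits, a consequence of preserving cyclic order on a dense subset), and that the unavoidable ambiguity in filling in gaps of $\T\setminus K$ is absorbed into the conjugacy class. The latter is handled by the observation that any two gap-filling choices differ by a homeomorphism supported on $\T\setminus K$ which commutes (trivially) with the action restricted to $K$, and can therefore be folded into the conjugating homeomorphism.
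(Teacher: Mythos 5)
Your route is the standard one --- the same construction the paper merely cites (it gives no proof, referring to \cite{MR} and the linear-order construction in \cite{Ghys}): embed $G$ in $\T$ compatibly with $c$, act by left multiplication, extend to the orbit closure, fill the gaps. But there is a genuine gap at exactly the step you flag as the main obstacle, and it is where all the content of the proposition lies. The principle you invoke --- that a cyclic-order-preserving bijection of a countable set $A\subset\T$ extends continuously to $\overline{A}$ because ``one-sided limits map to one-sided limits'' --- is false in general. For instance, on $\R/8\Z$ let $A=\{1-\tfrac1n\}\cup\{1+\tfrac1n\}\cup\{4-\tfrac1n\}\cup\{5+\tfrac1n\}$; the bijection $1-\tfrac1n\mapsto 4-\tfrac1n$, $1+\tfrac1n\mapsto 5+\tfrac1n$, $4-\tfrac1n\mapsto 1-\tfrac1n$, $5+\tfrac1n\mapsto 1+\tfrac1n$ preserves the cyclic order of $A$, yet at the two-sided accumulation point $1\in\overline{A}$ the left-hand images converge to $4$ and the right-hand images to $5$, so no continuous extension exists. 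Translated to your setting: a ``cut'' of $(G,c)$ that $\iota$ realizes as a single two-sided limit point of $K=\overline{\iota(G)}$ could a priori be sent by left multiplication to a cut realized as a gap of $K$, and then $\rho_c(g)$ would not extend. Whether this happens depends on the embedding, so it must be ruled out by an argument about your specific placement rule; your appeal to ``preserving cyclic order on a dense subset'' is circular, since $\iota(G)$ need not be dense.

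The missing argument, which is what makes the midpoint rule canonical, is this: show that a gap of $K$ occurs exactly at those cuts of $(G,c)$ given by a pair of group elements with no element of $G$ strictly between them on one side. Indeed, if a cut has elements of $G$ on both sides arbitrarily close to it, then infinitely many later points of the enumeration land strictly between the current bracketing pair, and each such placement halves the bracketing arc, so the two sides of the cut converge to a single point of $K$. Consequently every gap of $K$ has both endpoints in $\iota(G)$, and the gap/point dichotomy on cuts is expressed purely in terms of the circular order; it is therefore invariant under left multiplication (this is what gives continuity of the extension: a two-sided limit point can never be mapped ``across'' a gap) and independent of the enumeration (this is what makes your comparison of $\iota_1,\iota_2$ work, matching gaps of $K_1$ with gaps of $K_2$). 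It also shows each gap has trivial stabilizer in $G$, which is precisely what legitimizes your last step of absorbing the arbitrary gap-filling into the conjugacy; without trivial stabilizers that step fails, which is the point of the paper's remark that blowing up an orbit outside $\rho_c(G)(x)$ produces an action satisfying the same order formula but in a different conjugacy class. With these additions your proof is correct and coincides with the construction the paper cites.
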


A description of this procedure is given in \cite{MR}, modeled on the analogous linear case (see e.g.~\cite{Ghys}).  Note that modifying a dynamical realization by \emph{blowing up} the orbit of some point $y \notin \rho(G)(x)$ may result in a non-conjugate action that still satisfies the property $c(g_0,g_1,g_2) = \ord\left (\rho_c(g_0)(x), \rho_c(g_1)(x), \rho_c(g_2)(x)\right )$. However, this non-conjugate action cannot be obtained through the canonical procedure.

\begin{rem}\label{r:induce}
The converse to the above proposition is also true:  if $G$ is a countable subgroup of $\Homeo_+(\T)$, then $G$ admits a circular order.    A proof is given in \cite[Thm.~2.2.14]{Calegari}\footnote{In \cite[Prop.~2.4]{BS} the authors propose an alternative way of inducing an ordering of $G$, different from that in \cite{Calegari}. However their method is incorrect, as the following example shows: suppose to have three distinct homeomorphisms $f,g,h$, with $f$ coinciding with $g$ on one half circle and with $h$ on the other half. Then for any point $x\in \T$, there are always two equal points in the triple $(f(x),g(x),h(x))$.}.  
As a special case, if $\rho:G \to \Homeo_+(\T)$ is such that some point $x$ has trivial stabilizer, then we may define an \emph{induced order} on $G$ by 
\[c(g_1, g_2, g_3):=\ord(\rho(g_1)(x),\rho(g_2)(x),\rho(g_3)(x)).
\]
\end{rem}

While one cannot expect in general to find a point with trivial stabilizer, this does hold for ping-pong actions by the following lemma. 

\begin{lem}   \label{config_determines}
Suppose that $\rho$ is a ping-pong action of $(G, S)$ with domains $D(a)$.   If $x_0 \in \T \setminus \bigcup_{a \in S \cup S^{-1}} D(a)$, then the orbit of $x_0$ is free and its cyclic order is completely determined by the cyclic order of the elements of $\left \{ \pi_0 \left( \bigcup_{a\in S\cup S^{-1}}D(a) \right), \{x_0\} \right \}$ and the assignments $\lambda_a$.  
\end{lem}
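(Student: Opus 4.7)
The plan is to prove the lemma in three stages: iterated ping-pong for freeness, a recursive assignment of each orbit point to a specific connected component of some $D(a)$, and a strong induction on total word length to deduce the cyclic order. For freeness, given a nontrivial reduced word $w=s_1\cdots s_n$, the condition $x_0\notin D(s_n^{-1})$ combined with Definition~\ref{pp_action_def} yields $\rho(s_n)(x_0)\in\overline{D(s_n)}$; since $s_n\neq s_{n-1}^{-1}$ gives $\overline{D(s_n)}\cap D(s_{n-1}^{-1})=\emptyset$ (disjoint open sets), we get $\rho(s_{n-1}s_n)(x_0)\in\overline{D(s_{n-1})}$, and iterating, $\rho(w)(x_0)\in\overline{D(s_1)}$. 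Under the (implicit) strengthening that $x_0$ lies in the open set $\T\setminus\bigcup_a\overline{D(a)}$, this precludes $\rho(w)(x_0)=x_0$.

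For the address of each orbit point, Convention~\ref{reduced_con} says that $\rho(a)$ induces a cyclic-order-preserving bijection
\[
\tilde\lambda_a\colon\pi_0\bigl(\T\setminus\overline{D(a^{-1})}\bigr)\longrightarrow\pi_0\bigl(D(a)\bigr),
\]
and this bijection is determined by $\lambda_a$ together with the cyclic-order data: for any $I\in\pi_0(D(b))$ with $b\neq a^{-1}$, the gap $G\in\pi_0(\T\setminus\overline{D(a^{-1})})$ containing $I$ satisfies $\tilde\lambda_a(G)=\lambda_a(I)$, and one value of a cyclic-order-preserving bijection of finite cyclically ordered sets pins it down everywhere (such a gap containing a component of some $D(b)$ always exists, since $D(a)\subset\T\setminus\overline{D(a^{-1})}$). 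One then recursively assigns to each reduced $w=s_1\cdots s_n$ a component $C(w)\in\pi_0(D(s_1))$ by
\[
C(s_1):=\tilde\lambda_{s_1}(G_{x_0}),\quad C(s_1\cdots s_n):=\lambda_{s_1}\bigl(C(s_2\cdots s_n)\bigr)\text{ for }n\ge 2,
\]
with $G_{x_0}$ the gap of $\T\setminus\overline{D(s_1^{-1})}$ containing $x_0$, read from the cyclic order. The ping-pong reasoning of the first stage refines to $\rho(w)(x_0)\in\overline{C(w)}$, so the $D(s_1)$-component of each orbit point is determined by the data.

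For the cyclic order, I would induct strongly on $|w_1|+|w_2|+|w_3|$ over triples of distinct reduced words. Define the \emph{bin} $B(w)$ to be $\{x_0\}$ if $w=e$ and $C(w)$ otherwise. If the three bins are distinct, then the three orbit points lie in pairwise disjoint subsets of $\T$ whose cyclic order is part of the data, settling the cyclic order of the triple. Otherwise two bins coincide, say $B(w_1)=B(w_2)\in\pi_0(D(s))$, which forces $w_1$ and $w_2$ to share a first letter $s$. Applying the orientation-preserving homeomorphism $\rho(s^{-1})$ preserves cyclic order and replaces $w_i$ by the reduced word $s^{-1}w_i$; the lengths of the first two words drop by one each while the third changes by at most one, so total length strictly decreases and the inductive hypothesis applies. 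The base case (smallest total length) is forced to fall into the ``distinct bins'' situation.

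The main obstacle is the bookkeeping in the address construction—specifically the verification that $\tilde\lambda_a$ is fully determined by $\lambda_a$ and the cyclic-order data, including on gaps that happen to contain no connected component of any $D(b)$; the cyclic-order-preservation argument above resolves this. A minor secondary subtlety is that freeness can fail at boundary points of some $\overline{D(a)}$, which is why one should read the hypothesis as $x_0\in\T\setminus\bigcup_a\overline{D(a)}$.
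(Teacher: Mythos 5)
Your proof is correct and is essentially the argument the paper intends but delegates: it is the careful bookkeeping behind the classical ping-pong lemma cited from \cite{MR} (freeness by tracking which domain each image lies in, an address map $C(w)$ determined by the $\lambda_a$'s and the cyclic data, and then determination of the cyclic order of a triple by peeling off a common first letter and inducting on total word length). Your caveat that the hypothesis should be read as $x_0\in\T\setminus\bigcup_a\overline{D(a)}$ is well taken and consistent with the paper's setting (the cited lemma uses closed domains, and Convention~\ref{Closures} is in force when the present lemma is applied); without it the literal statement can fail, e.g.\ when $\overline{D(a)}$ and $\overline{D(a^{-1})}$ share a boundary point fixed by $\rho(a)$, as for a parabolic-type generator, in which case freeness breaks at that point.
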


The proof is obtained by a careful reading of the standard proof of the classical ping-pong lemma. Details are given in \cite[Lemma 4.2]{MR}.


\subsection{Isolated circular orders on free groups}
\label{dynam_sec}

In this section we will use ping-pong actions to prove Theorem \ref{CO_thm} from the introduction.   As this builds on the framework of \cite{MR}, we start by introducing two results obtained there.  

Let $G$ be any group, and $\rho: G \to \Homeo_+(\T)$.  Recall that, if $\rho(G)$ does not have a finite orbit, then there is a unique closed, $\rho(G)$-invariant set contained in the closure of every orbit, called the \emph{minimal set} of $\rho(G)$.  We denote this set by $\Lambda(\rho)$.  If $\Lambda(\rho) = \T$, the action is called \emph{minimal}.  Otherwise, $\Lambda(\rho)$ is homeomorphic to a Cantor set and $\rho$ permutes the connected components of $\T \setminus \Lambda(\rho)$.  While, for many examples of actions, the permutation will have many disjoint cycles, the next lemma states that this is not the case for dynamical realizations.

\begin{lem}[\cite{MR}  Lemma 3.21 and Cor.~3.24] \label{transitive_lem}
Let $\rho:G\to \Homeo_+(\T)$ be a dynamical realization of a circular order $c$. Suppose that $\rho$ has a minimal invariant Cantor set $\Lambda(\rho)$. Then $\rho$ acts transitively on the set of connected components of  $\T\setminus \Lambda(\rho)$.  
\end{lem}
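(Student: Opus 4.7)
The plan is to exploit the specific structure of the canonical dynamical realization. Recall that this procedure enumerates $G=\{g_0=e,g_1,g_2,\ldots\}$, inductively embeds a copy $t(G)\subset\T$ respecting the circular order $c$ (placing each new point $t(g_n)$ at a canonical position, typically the midpoint of the unique arc into which the circular order forces it), and extends the translation rule $\rho_c(g)(t(h))=t(gh)$ first by continuity to $\overline{t(G)}$ and then by affine interpolation across each complementary arc. The basepoint of the realization is $x=t(e)$ and its $\rho_c$-orbit equals $t(G)$.

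First I would identify $\Lambda(\rho)=\overline{t(G)}$. In the Cantor case, $\overline{t(G)}$ is a nonempty closed $\rho_c(G)$-invariant subset of $\T$ with empty interior. Since the unique minimal invariant set is contained in the closure of every orbit, $\Lambda(\rho)\subseteq\overline{t(G)}$; the reverse inclusion follows from the fact that a closed invariant Cantor set for a minimal-on-itself action has no nontrivial closed invariant subset, combined with $\Lambda(\rho)\cap\overline{t(G)}\neq\emptyset$. Consequently, the connected components of $\T\setminus\Lambda(\rho)$ are exactly the gaps in $\overline{t(G)}$.

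The main obstacle is to show that for every such gap $I$, both endpoints of $I$ actually lie in $t(G)$, and not merely in its closure. I would argue by contradiction: if an endpoint $p\in\overline{t(G)}\setminus t(G)$ bounded a gap $I$, there would exist a subsequence $t(g_{n_k})\to p$ approaching from the side of $\Lambda(\rho)$ (since the interior of $I$ lies in the complement of $\overline{t(G)}$). The midpoint-placement rule of the canonical construction then forces, at each sufficiently late stage, a further point of $t(G)$ to be inserted on both sides of the most recently placed $t(g_{n_k})$; iterating, one produces points of $t(G)$ arbitrarily close to $p$ from the side of $I$, contradicting the assumption that $I$ is an actual gap.

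Once this is established, transitivity follows quickly. Let $I$ be any gap, and let $t(h)$ be its left endpoint, which exists in $t(G)$ by the previous step. The gap $I$ is uniquely characterized as the connected component of $\T\setminus\Lambda(\rho)$ immediately to the right of $t(h)$; since $\rho_c(g)$ is an orientation-preserving homeomorphism sending $t(h)$ to $t(gh)$, it carries the right-gap of $t(h)$ (if it exists) to the right-gap of $t(gh)$. Fix a reference gap $I_0$; by equivariance a right-gap must exist at every point of $t(G)$ or at none, so we may take $I_0$ to be the right-gap of $t(e)$. Then every gap $I$ with left endpoint $t(h)$ equals $\rho_c(h)(I_0)$, which proves that $\rho_c(G)$ acts transitively on the set of connected components of $\T\setminus\Lambda(\rho)$.
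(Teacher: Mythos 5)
Your argument hinges on the identification $\Lambda(\rho)=\overline{t(G)}$, and this is false in general --- in fact it fails precisely in the situations this paper cares about. The minimal set is contained in the closure of \emph{every} orbit, so $\Lambda(\rho)\subseteq\overline{t(G)}$; but nothing forces the reverse inclusion, and your stated justification (minimality of $\Lambda(\rho)$) only re-proves the inclusion you already have. For the dynamical realization of an isolated circular order on $F_n$, the realization is a ping-pong action and the basepoint lies \emph{outside} $\bigcup_a D(a)\supseteq\Lambda(\rho)$ (cf.\ Lemma \ref{config_determines}), so the whole orbit $t(G)$ lies in the gaps of $\Lambda(\rho)$ and $\overline{t(G)}=\Lambda(\rho)\cup t(G)\supsetneq\Lambda(\rho)$ (indeed $\overline{t(G)}$ is then a Cantor set together with countably many isolated points, so it cannot even be the minimal Cantor set). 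Consequently your assertion that the components of $\T\setminus\Lambda(\rho)$ are the gaps of $\overline{t(G)}$ is wrong: a single component of $\T\setminus\Lambda(\rho)$ may contain infinitely many orbit points (e.g.\ when its stabilizer is infinite cyclic) and hence infinitely many gaps of $\overline{t(G)}$. For the same reason your key claim --- that the endpoints of every gap lie in $t(G)$ --- is aimed at the wrong object and is false as applied: endpoints of components of $\T\setminus\Lambda(\rho)$ lie in $\Lambda(\rho)$, which in the ping-pong case is disjoint from the orbit.

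The argument you give for that key claim is also circular. The canonical construction inserts a point of $t(G)$ into an arc only when some group element is forced by the circular order to lie there; if no element of the orbit enters the gap $I$ --- which is exactly the hypothesis you are trying to contradict --- then no placement near $p$ from the $I$ side is ever forced, so no contradiction arises. Note that some genuine property of the \emph{canonical} realization must be used: blowing up a second orbit of a minimal action produces an action inducing the same circular order at a suitable basepoint but having two orbits of gaps, so the statement is simply false for arbitrary actions realizing $c$. The paper does not reprove the lemma; it quotes \cite{MR} (Lemma 3.21 and Cor.~3.24), where the relevant property is that in a dynamical realization no interval whose $G$-translates are pairwise disjoint or equal can be disjoint from the basepoint orbit; applied to the gaps of $\Lambda(\rho)$ this shows every component of $\T\setminus\Lambda(\rho)$ contains a point $\rho(g)(x)$ of the orbit. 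Transitivity is then immediate and does not need endpoints in the orbit: if $\rho(g_1)(x)\in I$ and $\rho(g_2)(x)\in J$, then $\rho(g_2g_1^{-1})$ maps $I$ onto the gap containing $\rho(g_2)(x)$, namely $J$, since the action permutes the components of $\T\setminus\Lambda(\rho)$. Your final ``right-gap'' step is fine in spirit, but it rests on the two steps above, which do not hold.
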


Since a ping-pong action of a free group of rank at least 2 cannot have finite orbits, invariance of the minimal set immediately implies that $\Lambda(\rho) \subset \bigcup_{a \in S \cup S^{-1}} \overline{D(a)}$.   If additionally, for each $s \neq t \in S \cup S^{-1}$, one has $\overline{D(s)} \cap \overline{D(t)} = \emptyset$, then invariance of $\Lambda(\rho)$ and the definition of ping-pong implies that in fact $\Lambda(\rho) \subset  \bigcup_{a \in S \cup S^{-1}} {D(a)}$.  
Going forward, it will be convenient to have the this stronger condition, which is given by following lemma.  

\begin{lem}
Let $\rho_0$ be a ping-pong action of $(G, S)$ with domains $D_0(a)$ for $a \in S \cup S^{-1}$.  Then there exists an action $\rho$ with the same ping-pong configuration as $\rho_0$ and with domains $D(a)$ satisfying $\overline{D(s)} \cap \overline{D(t)} = \emptyset$ whenever $s \neq t$.    
\end{lem}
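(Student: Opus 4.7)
The plan is to shrink each domain $D_0(a)$ to a strictly smaller domain $D(a)$ whose closure is compactly contained in $D_0(a)$, and then build a new piecewise linear action on these shrunken domains realizing the same combinatorial configuration as $\rho_0$. The key observation is elementary: if $\overline{D(s)} \subset D_0(s)$ for every $s \in S \cup S^{-1}$, then for any distinct $s,t$ one has
\[
\overline{D(s)} \cap \overline{D(t)} \subset D_0(s) \cap D_0(t) = \emptyset,
\]
since the original ping-pong domains are pairwise disjoint open sets by Definition \ref{pp_action_def}. So the entire content of the lemma is to guarantee compact containment while preserving the configuration.

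First I would, for each $a \in S \cup S^{-1}$ and each connected component $I$ of $D_0(a)$, pick a strictly smaller open arc $I' \Subset I$ (so $\overline{I'} \subset I$), and set $D(a) := \bigcup I'$. Since each component of $D(a)$ is contained in a unique component of $D_0(a)$, the natural map $\pi_0(D(a)) \to \pi_0(D_0(a))$ is a bijection respecting the cyclic order on $\T$, and similarly for $D(a^{-1})$. Via these bijections, the assignments $\lambda_a$ from the configuration of $\rho_0$ transport unambiguously to candidate assignments for the new domains.

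Next, for each free generator $a \in S$, I would define $\rho(a)$ as the orientation-preserving piecewise linear homeomorphism of $\T$ that sends each connected component $J$ of $\T \setminus D(a^{-1})$ affinely onto the closure of the component $\lambda_a(J) \in \pi_0(D(a))$ prescribed by the configuration. Prescribing $\rho(a)$ on the closed complement of $D(a^{-1})$ in this way forces $\rho(a)$ on each component of $D(a^{-1})$ to be the unique orientation-preserving affine map onto the corresponding component of $\T \setminus \overline{D(a)}$, and these pieces glue together into a well-defined PL homeomorphism of $\T$. Extending to the free group by $\rho(a^{-1}) := \rho(a)^{-1}$ yields a homomorphism $\rho : G \to \Homeo_+(\T)$.

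Finally I would verify, essentially by unpacking definitions, that $\rho$ is a ping-pong action with domains $\{D(a)\}$ satisfying Convention \ref{reduced_con}, and that its ping-pong configuration coincides with that of $\rho_0$ — both the cyclic order of the components of $\bigcup_a D(a)$ and the assignments $\lambda_a$ agree by construction. There is no real obstacle here; the only mildly delicate point is the coherent PL construction of the homeomorphism $\rho(a)$ from its prescription on components, which is the same routine step carried out in the proof of Lemma \ref{GoodRealization}.
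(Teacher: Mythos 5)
Your proof is correct, but it takes a genuinely different route from the paper. The paper keeps the original action essentially intact: it blows up the orbits of the finitely many points where two closures $\overline{D_0(s)}\cap\overline{D_0(t)}$ meet, inserts wandering intervals there, extends the domains slightly into the inserted intervals, and defines the generators on those intervals so that the enlarged domains have disjoint closures; the resulting action collapses back onto $\rho_0$, so the configuration is visibly unchanged. You instead discard $\rho_0$ and rebuild a piecewise linear model on shrunken domains $\overline{D(a)}\subset D_0(a)$, exactly in the spirit of Lemma~\ref{GoodRealization}, and then disjointness of closures is automatic from disjointness of the original open domains $D_0(s)$. Your approach is more elementary and yields a PL (or smooth, if desired) realization for free, at the cost of replacing the action wholesale rather than modifying it; the paper's blow-up buys a new action that is semi-conjugate back to the original one, which is conceptually convenient but not needed for the statement. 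One small imprecision to fix: $\lambda_a$ in Definition~\ref{pp_config_def} is defined on components of $\bigcup_{b\neq a^{-1}}D(b)$, not on components of $\T\setminus D(a^{-1})$; what your construction actually uses is the bijection between $\pi_0\bigl(\T\setminus\overline{D_0(a^{-1})}\bigr)$ and $\pi_0(D_0(a))$ induced by $\rho_0(a)$ via Convention~\ref{reduced_con} (this is cyclic-order preserving since $\rho_0(a)$ is an orientation-preserving homeomorphism, which is what makes the affine pieces glue into a circle homeomorphism, and it is the bijection that forces the new $\lambda_a$'s to agree with the old ones). This is the same mild abuse the paper itself makes in Lemma~\ref{GoodRealization}, so it is a cosmetic point rather than a gap.
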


\begin{proof}
Let $\rho_0$ be a ping-pong action.   There are only finitely many points $x$ contained in sets of the form $\overline{D_0(s)} \cap \overline{D_0(t)}$ for $t \neq s \in S \cup S^{-1}$.  For each such point $x$, blow up its orbit, replacing each point $y\in \rho_0(G)(x)$ with an interval $I_{y}$; if lengths of the $I_{y}$ are chosen so their sum converges, then we obtain a new circle, say $ \hat{\mathbb S}^1$, with a natural continuous, degree one map $h:  \hat{\mathbb S}^1 \to \T$ given by collapsing each $I_{y}$ to the point $y$.   For each $s\in S\cup S^{-1}$, let $D_1(s) \subset \hat{\mathbb S}^1$ be the preimage of $D_0(S)$ under $h$.  

  Since $x$ has trivial stabilizer and $G$ is free, we may extend the action of $G$ to this new circle by allowing $a \in S$ to act as any orientation-preserving map from $I_y$ to $I_{a(y)}$.  We now show that we may choose maps in such a way as to achieve a ping-pong action with the desired properties.   

For each inserted interval $I = [p,q]$ that is adjacent to a set of the form $D_1(s)$ on the left and $D_1(t)$ on the right (where $s, t \in S \cup S^{-1}$), fix points $p_s < p_t$ in the interior of $I$ and extend $D_1(s)$ into $I$ to include $[p, p_s)$ and $D_1(t)$ to include $(p_t, q]$.   Having done this on each such interval, let $D(s)$ denote the new extended domains, and note that these have disjoint closures.  Now for $a \in S$, define the action of $a$ on such an interval $I = I_y$ as follows.  If $\rho_0(a)(y) \in D_0(a)$, the restriction of $\rho$ to $I$ may be any orientation-preserving homomorphism between $I_y$ and $I_{a(y)}$.   Otherwise, $I_y$ is adjacent to $D_1(a^{-1})$ either on the right or the left, and we define $\rho(a)$ on $I_y$ to map the chosen point $p_{a^{-1}} \in I_y$ to the point $p_a$ in $I_{a(y)}$.  This ensures that $\rho(a)(S^1 \setminus \overline{D(a^{-1})}) = D(a)$, so that these are indeed ping-pong domains for the action.  
Finally, note that by construction, the ping-pong configuration has not changed.  
\end{proof}

\begin{con}\label{Closures}
In a ping-pong action of $(G, S)$, we assume from now on that the domains $D(s)$ satisfy $\overline{D(s)}\cap \overline{D(t)}=\emptyset$  whenever $s \neq t\in S\cup S^{-1}$.  
\end{con}

It follows easily from invariance of $\Lambda(\rho)$ and the definition of ping-pong that, for actions as in Convention \ref{Closures}, we have the inclusion $\Lambda(\rho)\subseteq \bigcup_{a \in S \cup S^{-1}} {D(a)}$.
The following theorem from \cite{MR} relates circular orders and ping-pong actions.   

\begin{thm}[\cite{MR} Thm.~1.5]  \label{alternate_thm}
Let $G=F_n$ be a free group. A circular order $c\in \CO(G)$ is isolated if and only if its dynamical realization $\rho_c:G\to \Homeo_+(\T)$ is a ping-pong action satisfying Convention~\ref{Closures}.
\end{thm}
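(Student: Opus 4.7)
For the direction ping-pong $\Rightarrow$ isolated, I would start from a dynamical realization $\rho_c$ of $c$ which is ping-pong with domains $\{D(a)\}$ satisfying Convention~\ref{Closures}, and choose a basepoint $x_0$ outside $\bigcup_{a\in S\cup S^{-1}}\overline{D(a)}$. By Lemma~\ref{config_determines}, the cyclic order of the free orbit $\rho_c(G)\cdot x_0$ is completely determined by the finite combinatorial data of the ping-pong configuration. But the configuration itself can be encoded by the values of $c$ on finitely many triples of bounded-length words: choose one representative word $w_I$ per connected component $I$ of each $D(a)$, so that $\rho_c(w_I)(x_0)\in I$; the cyclic order of the representatives recovers the cyclic order of the components, and observing where $\rho_c(a)$ sends each $w_I\cdot x_0$ recovers the assignments $\lambda_a$. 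Consequently there is a finite set $X\subset G$ such that any $c'\in\CO(G)$ agreeing with $c$ on $X\times X\times X$ has a dynamical realization with the same ping-pong configuration; re-applying Lemma~\ref{config_determines} to $\rho_{c'}$ forces $c'=c$, so $c$ is isolated.

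For the converse I would argue by contrapositive: assume $\rho_c$ fails to be a ping-pong action satisfying Convention~\ref{Closures} and construct a sequence of distinct orders $c_n\to c$. The first step is to produce natural candidate domains from the dynamics of each generator, for example by letting $D(a)$ be a small neighborhood of the attracting part of $\Lambda(\rho_c)$ for $\rho_c(a)$ and $D(a^{-1})$ a small neighborhood of the repelling part. Either these candidates satisfy both the ping-pong property and Convention~\ref{Closures} — contradicting our hypothesis — or the failure has a definite combinatorial shape: two closures $\overline{D(s)}\cap\overline{D(t)}$ intersect, some $\rho_c(a)(\T\setminus \overline{D(a^{-1})})\not\subset\overline{D(a)}$, or the assignment $\lambda_a$ is not a bijection on $\pi_0$.

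In each such case, the plan is to exploit the dynamical flexibility remaining to perturb $\rho_c$ into a new action $\rho'$ whose order induced (via Remark~\ref{r:induce}) on the same basepoint orbit differs from $c$ on some triple. When $\Lambda(\rho_c)\ne \T$, Lemma~\ref{transitive_lem} says the complementary intervals form a single orbit; one can then modify $\rho_c$ $G$-equivariantly on a sub-orbit of intervals chosen disjoint from $\rho_c(G)\cdot x_0$, without disturbing the cyclic order on any prescribed finite initial segment of the orbit, while altering it on triples arising from longer words. When $\rho_c$ is minimal, one first blows up the orbit of a non-basepoint to manufacture such intervals, then performs the same type of modification.

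The main obstacle will be the minimal case, and specifically guaranteeing that the perturbed action yields a genuinely \emph{distinct} circular order rather than merely a non-conjugate realization of the same $c$. This requires pinpointing precisely which combinatorial feature of the ping-pong failure translates into a distinction of $c$-values on a finite triple of group elements; I would try to localize the failure near a specific component boundary and exhibit two words $u,v,w\in G$ whose basepoint images straddle that boundary in a way that the perturbation permutes. The non-minimal case should be more routine bookkeeping: the orbit structure of $\T\setminus\Lambda(\rho_c)$ furnishes ready-made room to perturb, and the remaining task is to verify that the new action is still faithful (automatic for free groups via a classical ping-pong argument applied to the perturbation) and not conjugate to $\rho_c$.
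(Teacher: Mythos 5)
First, note that the paper does not prove this statement at all: it is quoted directly from \cite{MR} (Thm.~1.5), so the only fair comparison is with the proof there, which your sketch does not reproduce. Judged on its own terms, your proposal has a genuine gap in each direction. For ping-pong $\Rightarrow$ isolated, the pivotal sentence ``there is a finite set $X\subset G$ such that any $c'$ agreeing with $c$ on $X\times X\times X$ has a dynamical realization with the same ping-pong configuration'' is unjustified, and it is essentially the whole content of the theorem. Agreement of $c'$ with $c$ on finitely many triples only controls the cyclic order of finitely many points $\rho_{c'}(g)(x')$; it says nothing dynamical about how the generators of $\rho_{c'}$ act on the rest of the circle, so you cannot conclude that $\rho_{c'}$ is a ping-pong action (let alone with the same configuration) and then invoke Lemma~\ref{config_determines} for it. The only way to know that $\rho_{c'}$ is ping-pong is to first know $c'=c$, which is circular. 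The correct route (as in \cite{MR}) is combinatorial rather than dynamical: one encodes the ping-pong nesting relations as finitely many cyclic-order relations among orbit points of $\rho_c$, and then shows by an induction on word length (the ping-pong argument run abstractly inside the left-invariant order $c'$) that these finitely many relations force the value of $c'$ on \emph{every} triple, with no claim about the dynamics of $\rho_{c'}$ ever being needed.

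For isolated $\Rightarrow$ ping-pong, what you give is a plan, not a proof, and you concede the hard point yourself. Two concrete problems: (i) your trichotomy of ``failure modes'' is not exhaustive, because a generator of a non-ping-pong realization need not have any attracting/repelling decomposition of $\Lambda(\rho_c)$ to seed candidate domains (it may have irrational rotation number, infinitely many fixed points, or domains that cannot be taken with finitely many components), so the case analysis does not get off the ground; (ii) even when a perturbation $\rho'$ not semi-conjugate to $\rho_c$ can be produced, turning it into a sequence of \emph{distinct} circular orders converging to $c$ requires that $\rho'$ retain a point with trivial stabilizer, that the perturbation be supported away from the images of an arbitrarily large finite set of group elements applied to the basepoint (to get convergence $c_n\to c$), and that the induced order actually differ from $c$ (distinctness), and in the minimal case — which is the generic and difficult one — you explicitly leave this unresolved. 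Since that is precisely where the substance of \cite{MR}'s argument lies, the proposal does not establish the theorem.
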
 

With these tools, we proceed to the main goal of this section.  

\begin{proof}[Proof of Theorem \ref{CO_thm}]
The case where $n$ is even is covered in \cite{MR}.  As explained there, the representation of $G$ into $\PSL(2,\R)$ coming from a hyperbolic structure on a genus $n/2$ surface with one boundary component gives an isolated circular order.  (In fact, by taking lifts to cyclic covers, one can obtain infinitely many isolated circular orders in distinct equivalence classes under the action of $\Aut(F_n)$ on $\CO(G)$.)  

To show that $F_n$ does not admit an isolated circular order when $n$ is odd, we need more work.   We begin with some generalities, applicable to free groups of any rank (even or odd).  
Suppose that $\rho: F_n\to \Homeo_+(\T)$ is a dynamical realization of an isolated circular order,  
and fix a free generating set $S = \{a_1, \ldots, a_n\}$ for $F_n$.   By Theorem~\ref{alternate_thm} and  Lemma~\ref{transitive_lem}, $\rho$ is a ping-pong action with domains satisfying Convention \ref{Closures}  and the connected components of $\T\setminus \Lambda(\rho)$ form a unique orbit. Let $c_0,\ldots, c_r$ be the (finitely many) connected components of $\T\setminus \Lambda(\rho)$ that are not contained in any domain $D(s)$.


Suppose that $c_i$ has endpoints in $D(s)$ and $D(t)$, for some $s \neq t$. Then, for any generator $u\notin\{s^{-1},t^{-1}\}$, we have that $\rho(u)(c_i)\in D(u)$.  
In addition, we have that
$\rho(s^{-1})(c_i)$ and $\rho(t^{-1})(c_i)$ belong to $\{c_0,\ldots,c_r\}$: indeed, the intersection $c_i\cap D(s)$ is nonempty; its image by $\rho(s^{-1})$ is contained in $\T\setminus (\Lambda\cup D(s^{-1}))$ and is adjacent to $D(s^{-1})$ because of Convention~\ref{reduced_con}; moreover, it has to intersect some $c_j$ because of Convention~\ref{Closures}. Then we must have $\rho(s^{-1})(c_i)=c_j$. The same holds for $t^{-1}$.
This implies that $c_i$ and $c_j$ are in the same orbit if and only if they are equivalent under the equivalence relation $\sim$ on 
$\{c_0,\ldots,c_r\}$ generated by 
\[c_i \sim c_j \quad\text{if there exists } t \in S \cup S^{-1} \text{ such that } c_i = \rho(t)(c_j) \text{ and } c_i \cap D(t) \neq \emptyset \]  
We will now argue that the number of equivalence classes under this relation can be 1 only if $n$ is even.  This is done by using the combinatorial data of the graphs from Definition~\ref{d:graph} to build a surface with boundary using the disc, and then making an Euler characteristic argument.   

For each generator $a\in S$, let $k(a)$ be the integer given by Proposition~\ref{p:graph}.
Let $P_a$ be a $4k(a)$-gon (topologically a disc) with cyclically ordered vertices $v_1, v_2, \ldots, v_{4k(a)}$.   Choose a connected component $I = [x_1,y_1]$ of $\overline{D(a)}$ and glue the oriented edge $v_1v_2$ to $I$ so as to agree with the orientation of $I \subset \T$.   Then glue the edge $v_3 v_4$ to the connected component of $\overline{D(a^{-1})}$ containing $\rho(a^{-1})(x_1)$, according to the orientation in $\T$.  Let $y_2$ denote the other endpoint of this connected component, and glue $v_5v_6$ to the connected component of $\overline{D(a)}$ containing $\rho(a)(y_2)$.  Iterate this process until all edges $v_{2j-1}v_{2j}$ have been glued to $\T = \partial \D$.  Our convention to follow the orientation of $\T$ implies that the resulting surface with boundary is orientable.  
Note that the remaining (unglued) edges of $P_a$ correspond exactly to the edges of the graph $\mathsf{\Gamma}_a$ from Definition~\ref{d:graph}; precisely, collapsing each connected component of $D(a)$ and of $D(a^{-1})$ to a point representing a vertex recovers the cycle $\mathsf{\Gamma}_a$.  

Now repeat this procedure for each generator in $S$, to obtain an orientable surface with boundary, which we will denote by $\Sigma$.  
A cartoon of the result of this procedure for the ping-pong action of Example~\ref{ex:strange} is shown in Figure~\ref{fig;surface_exotic}, and may be helpful to the reader.  

We claim that the number of boundary components of the surface $\Sigma$ is exactly the number of equivalence classes of the relation $\sim$.    To see this, we proceed as follows.  By construction, the connected components of $\partial \Sigma \cap \{c_0, \ldots, c_r\}$ are exactly the intervals $c_i$.  If some interval $c_i$ has endpoints in $D(s)$ and $D(t)$, then $\partial \Sigma \cap c_i$ is joined to $\rho(t^{-1}) c_i \cap \partial \Sigma$ and $\rho(s^{-1})c_i \cap \partial \Sigma$ by edges of $P_s$ and $P_t$ respectively.   Thus, $c_i \sim c_j$ implies that $c_i$ and $c_j$ lie in the same boundary component of $\Sigma$, and the intersection of that boundary component with $\{c_0, \ldots, c_r\}$ defines an equivalence class.  This proves the claim.  

We now compute the Euler characteristic of $\Sigma$ and conclude the proof.  
Proposition~\ref{p:graph} implies that the gluing of $P_a$ described in our procedure adds one face and $2k(a)$ edges to the existing surface. Therefore after all the polygons $P_a$ (as $a$ ranges over elements of $S$) have been glued, the surface $\Sigma$ obtained has  $\chi(\Sigma)\equiv n+1$ mod 2.   Since $\Sigma$ is orientable, $\chi(\Sigma)$ agrees mod 2 with the number of boundary components of $\Sigma$, which, by our claim proved above, agrees with the number of equivalence classes of $c_i$.  As discussed above, if $\rho$ is the dynamical realization of an isolated order then this number is equal to 1, hence $n+1 \equiv 1$ mod 2, and $n$ must be even.   
\end{proof}


The proof above can be improved to give a statement about general ping-pong actions:
\begin{thm}\label{t:gen_ping-pong}
Let $G=F_n$ be the free group of rank $n$ with free generating set $S$. Consider a ping-pong action $\rho$ of $(G,S)$ satisfying Conventions~\ref{reduced_con} and \ref{Closures}. Let $\Lambda(\rho)$  be the minimal invariant Cantor set for the action. Then the number of orbits of connected components of the complement $\T\setminus \Lambda(\rho)$ is congruent to $n+1$ mod 2.
\end{thm}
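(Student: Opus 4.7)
The plan is to rerun the surface-gluing construction from the proof of Theorem~\ref{CO_thm} and reinterpret the Euler characteristic calculation in the absence of the one-orbit hypothesis. Let $c_0, \ldots, c_r$ denote the finitely many connected components of $\T \setminus \Lambda(\rho)$ not contained in any domain $D(s)$, equipped with the equivalence relation $\sim$ generated by $c_i \sim c_j$ whenever $c_i = \rho(t)(c_j)$ for some $t \in S \cup S^{-1}$ with $c_i \cap D(t) \neq \emptyset$. Gluing a $4k(a)$-gon $P_a$ for each $a \in S$ along $\partial \D = \T$ following the cycle $\mathsf\Gamma_a$ from Proposition~\ref{p:graph} yields an orientable surface $\Sigma$; exactly as in the proof of Theorem~\ref{CO_thm}, one has $\chi(\Sigma) \equiv n+1 \pmod 2$, and the intersection of each boundary component of $\Sigma$ with $\{c_0, \ldots, c_r\}$ is a single $\sim$-equivalence class.

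What is new, and was unnecessary under the transitivity hypothesis of Theorem~\ref{CO_thm}, is that $\sim$-classes on $\{c_0, \ldots, c_r\}$ correspond bijectively to the \emph{full} set of $G$-orbits on components of $\T \setminus \Lambda(\rho)$. The direction ``$c_i \sim c_j$ implies same orbit'' is immediate from the definition of $\sim$, and the converse is precisely the argument recalled in the proof of Theorem~\ref{CO_thm}. The remaining step is to show that every $G$-orbit meets $\{c_0, \ldots, c_r\}$. Given any component $c \subset D(s_0)$, I would replace $c$ by $\rho(s_0^{-1})(c) \subset \T \setminus \overline{D(s_0^{-1})}$; if the new image lies in some $D(s_1)$ (necessarily with $s_1 \neq s_0^{-1}$), iterate, producing a reduced sequence. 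Since the combinatorial ping-pong configuration alone determines the orbit structure on $\pi_0(\T \setminus \Lambda)$, Lemma~\ref{GoodRealization} lets me assume $\rho$ is PL and satisfies $\rho(s)'|_{D(s^{-1})} \geq \mu > 1$ for every $s \in S \cup S^{-1}$; then each iteration multiplies the length of the image by at least $\mu$, so the process must terminate after at most $\lceil \log_\mu(2\pi/\ell(c)) \rceil$ steps, at some $c_i \in \{c_0, \ldots, c_r\}$.

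Combining the two bijections, the number of $G$-orbits of connected components of $\T \setminus \Lambda(\rho)$ equals the number of boundary components of $\Sigma$, which, by orientability, has the same parity as $\chi(\Sigma) \equiv n+1 \pmod 2$. The main technical point is the termination argument above; the cleanest route to it is the reduction to the PL model with uniform expansion provided by Lemma~\ref{GoodRealization}, where contraction of nested ping-pong images follows immediately from the derivative bound.
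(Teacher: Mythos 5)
Your proposal is correct and follows essentially the same route as the paper: the same surface $\Sigma$ and Euler characteristic parity computation, combined with the same key claim that every $G$-orbit of gaps meets $\{c_0,\ldots,c_r\}$, proved by passing to the uniformly expanding PL model of Lemma~\ref{GoodRealization} and using the length-growth argument to force some image of a gap outside the ping-pong domains. Your added remark that the combinatorial configuration determines the orbit structure (justifying the replacement of $\rho$ by the PL model) just makes explicit a step the paper leaves implicit.
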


\begin{proof}
As in the previous proof, 
let $c_0, c_1,\ldots,c_r$ be the connected components of $\T \setminus \Lambda(\rho)$ that are not contained in any domain $D(s)$, and recall that $G$ permutes the connected components of $\T \setminus \Lambda(\rho)$.  We claim that each cycle of this permutation contains at least one of the $c_i$.   Given this claim, we may construct an orientable surface $\Sigma$ as in the proof of Theorem \ref{CO_thm}, whose boundary components count the number of cycles.  Computing Euler characteristic as above shows that the number of cycles is congruent to $n+1$ mod 2.  

We now prove the claim.  Suppose that $I$ is a connected component of $\T \setminus \Lambda(\rho)$ contained in some $D(s)$.  By Lemma \ref{GoodRealization}, we can take $\rho(s)$ to be piecewise linear, and such that each $\rho(s^{-1})$ expands $D(s)$ uniformly, increasing the length of each connected component by a factor of some $\mu > 1$, independent of $s$.  
Iteratively, assuming that $\rho(s_k s_{k-1} \cdots s_1) (I) \subset D(s_{k+1}^{-1})$, then the length of $\rho(s_{k+1} s_k\cdots s_1) (I)$ is at least $\mu^{k+1}\len(I)$.  This process cannot continue indefinitely, so some image of $I$ is not contained in a ping-pong domain.  
\end{proof}


\subsection{Exotic examples}

To indicate some of the potential difficulty of the problem of classifying all isolated orders on $F_n$, we give an example of a ping-pong configuration for $F_2$ that, even after applying an automorphism of $F_2$, cannot arise from the construction in Example  \ref{easy_ex}.  

\begin{figure}
\[
\includegraphics[scale=1]{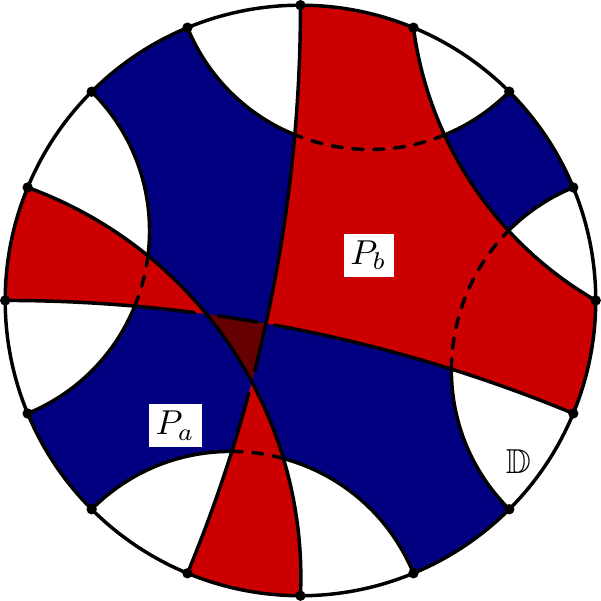}
\qquad
\includegraphics[scale=1]{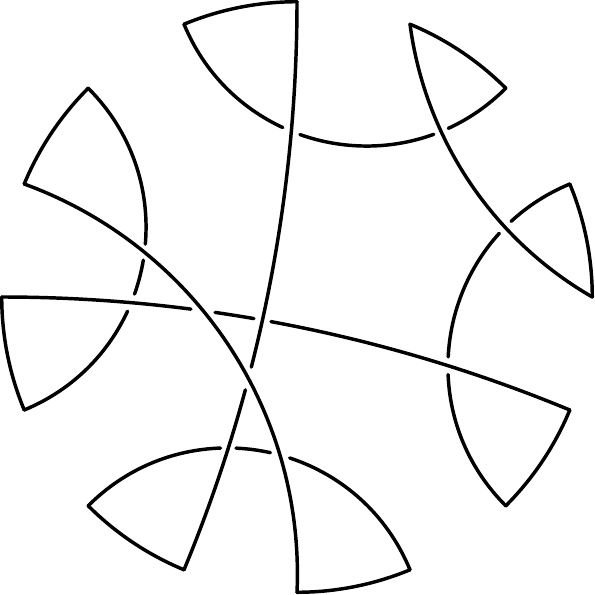}
\]
\caption{The surface associated with the exotic example (left), and its boundary component (right).}\label{fig;surface_exotic}
\end{figure}

\begin{figure}
\[
\includegraphics[scale=.9]{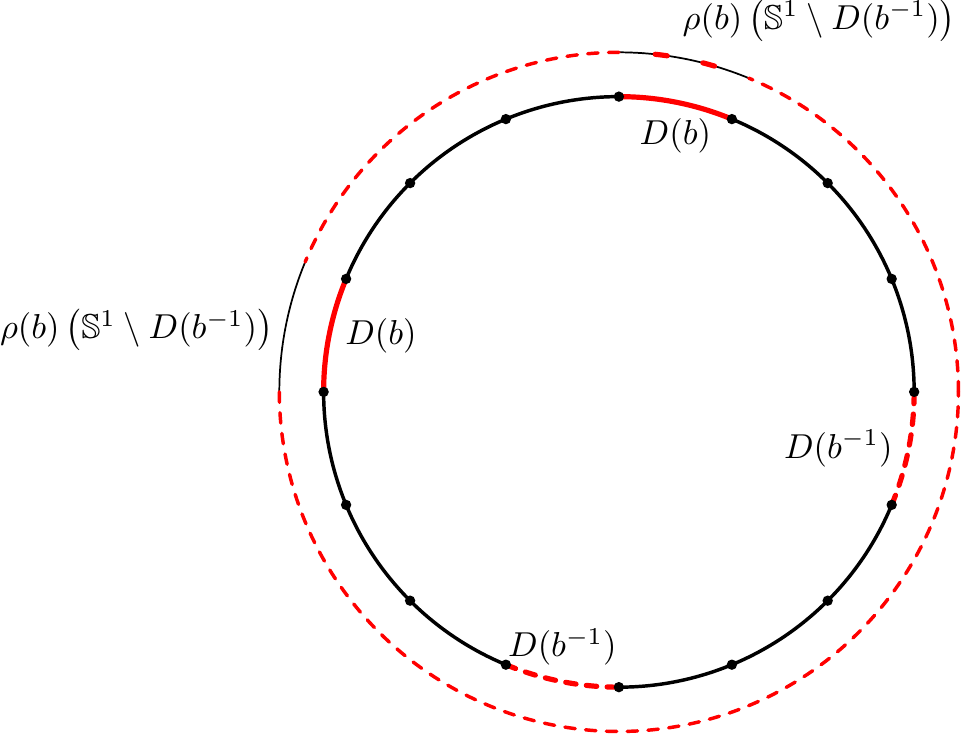}
\quad
\includegraphics[scale=.9]{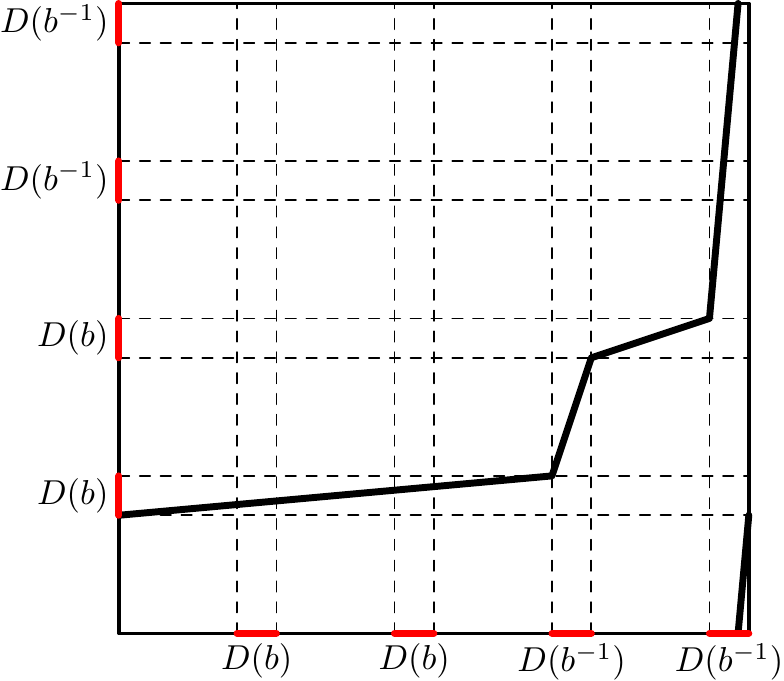}
\]
\caption{The ping-pong domains for $\rho(b)$ (left) and its graph (right). The circle is oriented counterclockwise.}\label{fig:strange}
\end{figure}

\begin{ex}\label{ex:strange}
Let $F_2 = \langle a, b\rangle$ and consider a ping-pong action where $\rho(b)$ is as defined by the graph in Figure~\ref{fig:strange}, and $\rho(a)$ is a hyperbolic element of $\SL(2,\R)$ chosen so that the connected components of the domains for the ping-pong action are in cyclic order as follows: 
\[ D(b^{-1}), D(a^{-1}), D(b^{-1}), D(a), D(b), D(a^{-1}), D(b), D(a) \]
(we are abusing notation slightly here, using each appearance of $D(s)$ to stand for a connected component of $D(s)$).
See Figure~\ref{fig;surface_exotic} (left) for an illustration of the domains and the surface $\Sigma$ constructed in the proof of Theorem \ref{CO_thm}.  

Since $\rho(b)$ has two hyperbolic fixed points, and $\rho(a)$ has four, this example is not realized by a ping-pong action in $\PSL(2,\R)$, nor in any finite extension of it.   In fact, the ping-pong configuration for $\rho(b)$ alone is atypical, in the sense that it is not the classical ping-pong configuration for a hyperbolic element in $\PSL(2,\R)$ -- $\rho(b)$ has a ``slow'' contraction on the left half of the circle, as two iterations of $\rho(b)$ are needed in order to bring the external gaps of $D(b)\cup D(b^{-1})$ into the component of $D(b)$ with the attracting fixed point.  

However, the surface $\Sigma$ from this construction has one boundary component, as shown in Figure~\ref{fig;surface_exotic} (right), so it corresponds to an isolated circular order in $\CO(F_2)$.
\end{ex}

Observe that one can create several examples of this kind, by choosing $\rho(b)$ to have two hyperbolic fixed points, but with an arbitrarily slow contraction (i.e.~with $N$ connected components for $D(b)$, $N\in\N$ arbitrary) and then choosing $\rho(a)$ to be a $N$-fold lift of a hyperbolic element in $\PSL(2,\R)$.    


\section{Left-invariant linear orders on $F_n\times \Z$}  \label{LO_sec}

The purpose of this section is to prove Theorem~\ref{LO_thm}, stating that $F_n\times \Z$ admits an isolated linear order if and only if $n$ is even.

\subsection{Preliminaries on linear orders}
\label{s:LOpre}

Before proceeding to the proof of Theorem \ref{LO_thm}, we recall some standard tools. 
As for circular orders, linear orders on countable groups have a \emph{dynamical realization} (see for instance \cite[Prop.~1.1.8]{GOD}). One quick way of seeing this given what we have already described, is by thinking of a linear order as a special case of a circular order.  
Indeed, given a linear order $\preceq$ on a group $G$, one defines the cocycle $c_\preceq$ by setting, for distinct $g_1,g_2,g_3\in G$
\[c_\preceq(g_1, g_2, g_3) = \mathsf{sign}(\sigma),\]
where $\sigma$ is the permutation of the indices such that $g_{\sigma(1)} \prec g_{\sigma(2)} \prec g_{\sigma(3)}$. 
Thus, the construction of the dynamical realization sketched in the proof of Proposition~\ref{p:realization} may be performed also for a linear order.  The result is an action on the circle with a single one global fixed point, which one can view as an action on the line with no global fixed point.   Conversely, a faithful action on the real line $\rho:G\to\Homeo_+(\R)$ can be viewed as a faithful action on the circle with a single fixed point, and the circular orders produced as in Remark~\ref{r:induce} will be linear orders on~$G$.

Next, we recall the notion of convex subgroups, their dynamical interpretation, and their relationship to isolated orders.

\begin{dfn}
 A subgroup $C$ in a linearly-ordered group $(G,\preceq)$ is \emph{convex} if for any two elements $h,k\in C$, and for any $g\in G$, the condition
$h\preceq g \preceq k$ implies $g\in C$. 
\end{dfn}

\begin{lem}[see \cite{GOD}, Prop.~2.1.3]\label{r:convex}
Let $G$ be a countable left-ordered group and consider a dynamical realization $\rho$ of $(G,\preceq)$ with basepoint $x$ such that $C$ is a convex subgroup.   Let $I$ be the interval bounded by $\inf_{h\in C}\rho(h)(x)$ and $\sup_{h\in C} \rho(h)(x)$.  Then $I$
has the following property:
\begin{equation}\label{interval}
\text{for any $g\in G$, either $\rho(g)(I) = I$, or $\rho(g)(I) \cap I = \emptyset$.}
\end{equation}
Moreover, the stabilizer of $I$ is precisely $C$. 

Conversely, given a faithful action on the real line $\rho:G\to\Homeo_+(\R)$, if an interval $I$ has the property \eqref{interval}, then the stabilizer $C=\stab{G}{I}$ is convex in any induced order with basepoint $x\in I$.
\end{lem}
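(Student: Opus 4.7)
The plan is to leverage the fact that, in a dynamical realization $\rho$ of $(G,\preceq)$ with basepoint $x$, the evaluation map $\mathsf{ev}_x\colon g\mapsto \rho(g)(x)$ embeds $(G,\preceq)$ into $(\R,<)$ as a free, order-preserving orbit. Under this embedding, $I$ is precisely the open convex hull of $\rho(C)(x)$, so invariance questions about $I$ reduce to comparing the relative positions of cosets of $C$ in $G$.

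For the forward direction, since $\rho(g)$ is an orientation-preserving homeomorphism of $\R$, $\rho(g)(I)$ is again an open interval, and continuity of $\rho(g)$ gives $\rho(g)(I)=\left(\inf_{h\in C}\rho(gh)(x),\,\sup_{h\in C}\rho(gh)(x)\right)$, i.e., the open convex hull of $\rho(gC)(x)$. If $g\in C$, then $gC=C$ and so $\rho(g)(I)=I$. If $g\notin C$, I will first verify the elementary algebraic fact that every coset $gC$ of a convex subgroup lies entirely above $C$ or entirely below $C$ in $(G,\preceq)$: otherwise one could find $h_1\preceq g\preceq h_2$ with $h_1,h_2\in C$, forcing $g\in C$ by convexity; and one extends the one-sidedness from $g$ to the whole coset $gC$ via the equivalence $gh\succ h'\Leftrightarrow g\succ h'h^{-1}$, using $h'h^{-1}\in C$. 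Feeding this through $\mathsf{ev}_x$ places $\rho(gC)(x)$ strictly on one side of $\rho(C)(x)$, so $\rho(g)(I)\cap I=\emptyset$. This establishes property \eqref{interval} and simultaneously identifies $\Stab_G(I)$ with $C$.

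For the converse, suppose $g_1\preceq g\preceq g_2$ with $g_1,g_2\in\Stab_G(I)$, where $\preceq$ is the order induced by $x\in I$. By definition of the induced order, $\rho(g_1)(x)\le \rho(g)(x)\le \rho(g_2)(x)$; since $g_1,g_2$ preserve $I$ and $x\in I$, both $\rho(g_1)(x)$ and $\rho(g_2)(x)$ lie in $I$, forcing $\rho(g)(x)\in I$. Then $\rho(g)(I)$ and $I$ share the point $\rho(g)(x)$, and the dichotomy \eqref{interval} upgrades this to $\rho(g)(I)=I$, i.e., $g\in\Stab_G(I)$. The main subtlety I anticipate is the coset-ordering step, which genuinely needs $C$ to be a \emph{subgroup}, not merely a convex subset, in order to propagate the above/below dichotomy from a single element $g$ to the entire coset $gC$; once that is in hand, the rest is a routine translation between order-theoretic and dynamical statements.
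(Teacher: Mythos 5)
Your overall strategy (read everything through the order-preserving evaluation map $g\mapsto\rho(g)(x)$, identify $I$ with the convex hull of $\rho(C)(x)$, and reduce \eqref{interval} to the statement that each coset $gC$ lies entirely on one side of $C$) is the standard route — the paper itself simply cites \cite{GOD} for this — and your treatment of the converse direction is correct. However, there is a genuine flaw in the forward direction, precisely at the step you yourself single out as the crux: the claimed equivalence $gh\succ h'\Leftrightarrow g\succ h'h^{-1}$ is \emph{right}-invariance of the order, which a left order on $G$ does not satisfy in general (and none of the orders in this paper are assumed bi-invariant). Left-invariance only lets you multiply both sides of an inequality on the left, so you cannot pass from $g\succ h'h^{-1}$ to $gh\succ h'$ by cancelling $h$ on the right. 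As written, the extension of one-sidedness from the single element $g$ to the whole coset $gC$ therefore has no justification.

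The intermediate claim is nevertheless true, and the repair uses convexity again rather than any right-invariance. Suppose $g\notin C$ and the coset $gC$ is not entirely above or entirely below $C$; then there are $h_1,h_1',h_2,h_2'\in C$ with $gh_1\prec h_1'$ and $gh_2\succ h_2'$. Compare $gh_1$ with $h_2'$: if $h_2'\prec gh_1$, then $h_2'\prec gh_1\prec h_1'$ traps $gh_1$ between two elements of $C$, so $gh_1\in C$ and hence $g\in C$, a contradiction; if instead $gh_1\prec h_2'$, then $gh_1\prec h_2'\prec gh_2$, and left-multiplying by $g^{-1}$ gives $h_1\prec g^{-1}h_2'\prec h_2$, so $g^{-1}h_2'\in C$ by convexity and again $g\in C$, a contradiction. (Note this is where the subgroup property of $C$ enters, via $g^{-1}h_2'\in C\Rightarrow g\in C$.) With this substitution your argument goes through: the open hulls of $\rho(gC)(x)$ and $\rho(C)(x)$ are disjoint for $g\notin C$, which yields both \eqref{interval} and $\Stab_G(I)=C$; the converse paragraph needs no change.
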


It is easy to see that the family of convex subgroups of a linearly ordered group $(G,\preceq)$ forms a \emph{chain}: if $C_1,C_2$ are two convex subgroups of $(G,\preceq)$, then either $C_1\subset C_2$ or $C_2\subset C_1$. Moreover, for any convex subgroup $C\subset G$, the group $G$ acts on the ordered coset space $(G/C,\preceq_C)$ by order-preserving transformation.  (The induced order on the coset space is given by $fC<_C gC$ if and only if $fc< gc'$ for every $c,c'\in C$, which makes sense because $C$ is convex.)  In particular, this implies that if $C$ is convex in $(G,\preceq)$, then \emph{any} linear order $\preceq_C$ on $C$ may be extended to a (new) order $\preceq'$ on $G$ by declaring 
\[
id\preceq' g \Leftrightarrow \begin{cases}
C\preceq_C gC & \text{if }g\notin C, \\
id\preceq' g& \text{if }g\in C.
\end{cases}
\]
Elaborating on this, one can show the following lemma (see \cite[Prop. 3.2.53]{GOD} or \cite[Thm.~2]{MatsumotoLO} for details).
\begin{lem}\label{l:chain}
If $(G,\preceq)$ has an infinite chain of convex subgroups, then $\preceq$ is non-isolated in $\LO(G)$.
\end{lem}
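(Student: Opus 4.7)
Since the set of convex subgroups of $(G,\preceq)$ is itself totally ordered by inclusion, having an infinite chain just means that there are infinitely many convex subgroups. I would use a ``convex flip'' construction: for any convex subgroup $C$ of $(G,\preceq)$, define $\preceq'$ on $G$ by declaring, for $g_1 \neq g_2$, that $g_1 \prec' g_2$ iff either $g_1 C \prec_{G/C} g_2 C$, or $g_1 C = g_2 C$ and $g_2 \prec g_1$. In words, $\preceq'$ keeps the induced order on the coset space $G/C$ but reverses $\preceq$ within each coset of $C$. Convexity of $C$ and left-invariance of $\preceq$ make the check that $\preceq'$ is again a left-invariant linear order routine: left multiplication by $\gamma$ preserves cosets of $C$ and the quotient order on $G/C$, and reverses the restriction within a coset in the same way on both sides.

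The two useful features of this flip are: (i) $\preceq' = \preceq$ iff $C = \{id\}$, and (ii) $\preceq'$ agrees with $\preceq$ on a finite set $X \subset G$ iff no two distinct elements of $X$ lie in a common coset of $C$, i.e.\ $C \cap (X^{-1}X) = \{id\}$. Thus it suffices to show that for every finite $X\subset G$ there exists a \emph{nontrivial} convex subgroup $C$ with $C\cap (X^{-1}X)=\{id\}$: the corresponding $\preceq'$ then lies in $U_{(\preceq,X)} \setminus \{\preceq\}$, and every basic open neighbourhood of $\preceq$ in $\LO(G)$ contains a distinct order, proving non-isolation.

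To produce such a $C$, set $F := X^{-1}X \setminus \{id\}$, which is finite. For each $f \in F$ the convex subgroups containing $f$ form an upper segment of the chain of all convex subgroups, so the convex subgroups avoiding \emph{every} $f \in F$ form a lower segment, obtained by removing finitely many upper segments. As the full chain is infinite, this lower segment generically contains a nontrivial convex subgroup, which we take as $C$.

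The main obstacle is the degenerate case where every nontrivial convex subgroup of $(G,\preceq)$ contains some $f \in F$, so that the naive flip cannot avoid changing a comparison within $X$. This is handled by a generalized flip on a pair $C_{\mathrm{in}} \subsetneq C_{\mathrm{out}}$ of convex subgroups, which reverses the order only on cosets of $C_{\mathrm{in}}$ lying inside a common coset of $C_{\mathrm{out}}$; the relevant condition becomes $F \cap C_{\mathrm{out}} \subseteq C_{\mathrm{in}}$. Such a pair always exists: take $C_{\mathrm{in}}$ to be a convex subgroup containing all of $F$ and $C_{\mathrm{out}}$ any strictly larger element of the chain, available because the chain is infinite above $C_{\mathrm{in}}$. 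Combining the two regimes yields the conclusion; the detailed case analysis is carried out in \cite[Prop.~3.2.53]{GOD} and \cite[Thm.~2]{MatsumotoLO}.
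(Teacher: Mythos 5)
Your flip machinery is sound and is essentially the construction the paper recalls just before the lemma: reversing $\preceq$ inside a convex subgroup $C$ while keeping the coset order $\preceq_C$ is the extension of a (new) order on $C$ to $G$, and your agreement criterion $C\cap(X^{-1}X)=\{id\}$ is correct. The ``generalized flip'' on a pair $C_{\mathrm{in}}\subsetneq C_{\mathrm{out}}$ is also a genuine left order --- the cleanest way to see this is that it is the simple flip on $C_{\mathrm{out}}$ followed by the simple flip on $C_{\mathrm{in}}$, which remains convex for the flipped order --- and it changes the sign of exactly the elements of $C_{\mathrm{out}}\setminus C_{\mathrm{in}}$. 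So your reduction is right: it suffices to produce, for each finite $F=X^{-1}X\setminus\{id\}$, convex subgroups $C_{\mathrm{in}}\subsetneq C_{\mathrm{out}}$ with $F\cap C_{\mathrm{out}}\subseteq C_{\mathrm{in}}$.

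The gap is in your existence argument for that pair in the ``degenerate'' case. You take $C_{\mathrm{in}}$ to be a convex subgroup containing all of $F$ and assert a strictly larger convex subgroup exists ``because the chain is infinite above $C_{\mathrm{in}}$''. Nothing in the hypothesis gives this: the smallest convex subgroup containing $F$ may already be all of $G$, with the infinitude of the chain occurring strictly between the jumps of two elements of $F$. Concretely, let $G=\bigoplus_{i\in I}\Z$ with the lexicographic order over $I=\{0\}\cup\{1-\tfrac1n:n\ge1\}\cup\{1\}$: the convex subgroups are the initial-segment subsums, an infinite chain; every nontrivial convex subgroup contains the index-$0$ generator $f_1$, while the only convex subgroup containing the index-$1$ generator $f_2$ is $G$. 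If $f_1,f_2\in F$, your Case 1 fails and your Case 2 recipe has no candidate $C_{\mathrm{in}}$ (this $G$ is abelian, but the lemma and your proof must cover every $(G,\preceq)$, so this configuration of convex subgroups has to be handled). The repair makes the case split unnecessary: the assignment $C\mapsto C\cap F$ is a monotone map from the infinite chain of convex subgroups to a chain of subsets of $F$, hence takes at most $|F|+1$ values, so by pigeonhole there are distinct convex subgroups $C_{\mathrm{in}}\subsetneq C_{\mathrm{out}}$ with $C_{\mathrm{in}}\cap F=C_{\mathrm{out}}\cap F$, i.e.\ $F\cap(C_{\mathrm{out}}\setminus C_{\mathrm{in}})=\emptyset$; your generalized flip for this pair then lies in $U_{(\preceq,X)}\setminus\{\preceq\}$, as required. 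With that substitution your outline matches the argument the paper delegates to \cite[Prop.~3.2.53]{GOD} and \cite[Thm.~2]{MatsumotoLO}; as written, however, the existence step is a genuine hole.
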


Let us also introduce a dynamical property that implies that an order is non-isolated. 
Recall that two representations $\rho_1$ and $\rho_2: G \to \Homeo_+(\R)$ are \emph{semi-conjugate} if there is a proper, non-decreasing map $f: \R \to \R$ such that $f \circ \rho_1 = \rho_2 \circ f$.  

\begin{dfn}
Let $G$ be a discrete group.  Let $\mathsf{Rep}(G,\Homeo_+(\R))$ denote the space of representations (homomorphisms) $G \to \Homeo_+(\R)$, endowed with the compact-open topology; let $\mathsf{Rep}_\#(G,\Homeo_+(\R))$ be the subspace of representations with no global fixed points.

A representation $\rho \in \mathsf{Rep}_\#(G,\Homeo_+(\R))$ is said to be \emph{flexible} if every open neighborhood of $\rho$ in $ \mathsf{Rep}_\#(G,\Homeo_+(\R))$ contains a representation that is not semi-conjugate to $\rho$.
\end{dfn}

The following lemma is implicit in work of Navas \cite{NavasFourier} as well as in \cite{RivasLO}.  An explicit proof can be found in \cite[Prop.~2.8]{ABR}. 

\begin{lem}\label{r:Flexible}
Let $G$ be a discrete, countable group and let $\rho_0$ be the dynamical realization of an order~$\preceq$ with basepoint $x\in \R$. If $\rho_0$ is flexible, then $\preceq$ is non-isolated in $\LO(G)$.
\end{lem}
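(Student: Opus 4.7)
The plan is to argue directly: assuming $\rho_0$ is flexible, for any finite set $X\subset G$ containing the identity we produce a linear order $\preceq'\neq\preceq$ on $G$ with $\preceq'|_X=\preceq|_X$, thereby showing that $\preceq$ is non-isolated in $\LO(G)$.

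First I translate agreement on $X$ into an open condition on $\mathsf{Rep}_\#(G,\Homeo_+(\R))$. Since $\rho_0$ is a dynamical realization, the orbit $\rho_0(G)\cdot x$ is free, so the points $\{\rho_0(g)(x)\}_{g\in X}$ are pairwise distinct and their ordering on $\R$ records $\preceq|_X$. By continuity of evaluation in the compact-open topology, there is an open neighborhood $U$ of $\rho_0$ in $\mathsf{Rep}_\#(G,\Homeo_+(\R))$ such that every $\rho\in U$ satisfies $\rho(g_1)(x)<\rho(g_2)(x)\iff g_1\prec g_2$ for all $g_1,g_2\in X$.

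Next, I invoke flexibility to select $\rho\in U$ that is not semi-conjugate to $\rho_0$, and extract from $\rho$ a linear order $\preceq'$ on $G$. Choose $y\in\R$ close enough to $x$ that $y$ has trivial $\rho(G)$-stabilizer and the ordering of $\{\rho(g)(y)\}_{g\in X}$ still encodes $\preceq|_X$. Define $g\preceq' h\iff \rho(g)(y)\leq\rho(h)(y)$; this is a left-invariant linear order on $G$ with $\preceq'|_X=\preceq|_X$ by construction. If we had $\preceq'=\preceq$, then $\rho$ at basepoint $y$ and $\rho_0$ at basepoint $x$ would induce the same linear order on $G$, so the orbital map $\rho_0(g)(x)\mapsto\rho(g)(y)$ would be monotone on the free orbit of $x$. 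Closing gaps and inserting constant pieces in the standard way extends it to a proper non-decreasing $f:\R\to\R$ with $f\circ\rho_0=\rho\circ f$, contradicting the choice of $\rho$. Hence $\preceq'\neq\preceq$, as required.

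The main obstacle is ensuring the existence of $y$ near $x$ with trivial $\rho(G)$-stabilizer. The set of such candidates is the complement of $\bigcup_{g\neq e}\Fix(\rho(g))$, a countable union of closed sets which may fail to be dense near $x$ if some nontrivial element of $G$ acts as the identity on an open interval meeting every neighborhood of $x$. The remedy is a preliminary perturbation: replace $\rho$ by a further small perturbation $\tilde\rho\in U$ that is locally faithful near $x$ while retaining non-semi-conjugacy to $\rho_0$. Standard arguments (relying on the closedness of the semi-conjugacy class of $\rho_0$ in $\mathsf{Rep}_\#$, detected by continuous invariants such as the Poincar\'e translation-number quasi-morphism) show that such $\tilde\rho$ exists, and Baire category applied to $\tilde\rho$ on a small interval around $x$ then supplies the required basepoint.
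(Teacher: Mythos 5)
Your overall strategy (perturb inside a basic open set determined by $X$, induce an order from the perturbed action, and show that equality of orders would force a semi-conjugacy) is the right one, and it is essentially the strategy of the proof the paper points to (\cite[Prop.~2.8]{ABR}; the paper itself only cites this). The first two steps are fine: the condition $\rho(g_1)(x)<\rho(g_2)(x)$ for the finitely many pairs in $X$ is open in the compact-open topology, and the extension of a monotone equivariant map on an orbit to a proper non-decreasing semi-conjugacy works because representations in $\mathsf{Rep}_\#$ have all orbits unbounded in both directions.

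The genuine gap is the last paragraph. A representation $\rho$ produced by flexibility need not be faithful, let alone have a point with trivial stabilizer near $x$, and your proposed remedy is not justified: you need a further perturbation $\tilde\rho$ that simultaneously stays in $U$, \emph{remains} non-semi-conjugate to $\rho_0$, and has a free orbit, but non-semi-conjugacy is not stable under small perturbations (semi-conjugacy classes in $\mathsf{Rep}_\#$ are in general neither open nor closed), so "closedness of the semi-conjugacy class" is false as stated; moreover the translation-number quasi-morphism you invoke is an invariant of (lifts of) circle actions and is not defined, nor does it detect semi-conjugacy, for general actions on the line. So the existence of the basepoint $y$ is exactly the unproven step. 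The standard way around it — and the one used in the cited proof — is to not look for a free orbit at all: keep the original basepoint $x$ and define $g\preceq' h$ iff $\rho(g)(x)<\rho(h)(x)$, or $\rho(g)(x)=\rho(h)(x)$ and $g\preceq h$. Breaking ties by $\preceq$ yields a genuine left-invariant total order (left-invariance follows since $\rho(f)$ preserves both strict inequalities and equalities of the values $\rho(g)(x)$, and $\preceq$ is left-invariant), it agrees with $\preceq$ on $X$ by the choice of $U$, and if $\preceq'=\preceq$ then $\rho_0(g)(x)\mapsto\rho(g)(x)$ is monotone on the free $\rho_0$-orbit and extends, e.g.\ by $f(y)=\sup\{\rho(g)(x):\rho_0(g)(x)\le y\}$, to a proper non-decreasing equivariant map, contradicting the choice of $\rho$. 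With this modification your argument closes; as written, it does not.
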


\begin{rem}
Though not needed for our work here, we note that a precise characterization of isolated circular and linear orders in terms of a strong form of rigidity (i.e.~strong non-flexibility) of their dynamical realizations is given in \cite{MR}.
\end{rem}

As mentioned in the introduction, in order to prove Theorem~\ref{LO_thm} we use the relationship between circular orders on groups and linear orders on their central extensions by $\Z$.  For this purpose, we need the notion of {\em cofinal elements}.

\begin{dfn}
An element $h$ in a linearly-ordered group $(G, \preceq)$ is called \emph{cofinal} if
\begin{equation}\label{eq:cofinal}
\text{for all $g\in G$,
there exist $m,n\in\Z$ such that $h^m \preceq g \preceq h^n$.}
\end{equation}
\end{dfn}

\begin{rem}\label{r:cofinal}
Cofinal elements also have a characterization in terms of the dynamical realization:  If $\rho$ is a dynamical realization of $\preceq$ with basepoint $x$, then $h \in G$ is cofinal if and only if $\rho(h)$ has no fixed point.   Indeed, if $h$ is not cofinal, then the point $\inf\{\rho(g)(x)\mid h^n\preceq g\text{ for every }n\in \Z\}$ is fixed by $\rho(h)$.  Conversely, if $h$ satisfies \eqref{eq:cofinal}, then the orbit of $x$ under $\rho(h)$ is clearly unbounded on both sides.  
\end{rem}

Given a group $G$ with a circular order $c$, there is a natural procedure to \emph{lift $c$ to} a linear order~$\preceq_c$ on a central extension of $G$ by $\Z$ such that any generator of the central $\Z$ subgroup is cofinal for $\preceq_c$ \cite{zheleva,MR}. 
The following statement appears as Proposition 5.4 in~\cite{MR}.
\begin{prop}\label{LO_central}
Assume that $G$ is finitely generated and $c$ is an isolated circular order on $G$. If $\preceq_c$  is the lift of $c$ to a central extension  $\widehat{G}$ of $G$ by $\Z$, then the induced linear order $\preceq_c$ is isolated in $\LO(\widehat{G})$.
\end{prop}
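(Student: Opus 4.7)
The plan is to exhibit, for a given isolated circular order $c \in \CO(G)$, a finite set $X \subset \widehat{G}$ such that every linear order $\preceq' \in \LO(\widehat{G})$ which agrees with $\preceq_c$ on $X$ must coincide with $\preceq_c$. Because $c$ is isolated in $\CO(G)$, there is a finite subset $Y \subset G$ with the property that $c$ is the unique circular order on $G$ whose restriction to $Y\times Y\times Y$ agrees with $c$. Since $G$ is finitely generated, so is $\widehat{G}$; fix a finite generating set $T$ of $\widehat{G}$ containing $z$ together with lifts of the generators of $G$. Since $z$ is cofinal in $\preceq_c$, for each $\hat{s} \in T \cup T^{-1}$ we choose an integer $n(\hat{s}) \in \N$ with $z^{-n(\hat{s})} \prec_c \hat{s} \prec_c z^{n(\hat{s})}$. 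The set $X$ will then consist of $T \cup T^{-1}$, the powers $z^k$ with $|k| \le \max_{\hat{s}} n(\hat{s})$, chosen lifts $\hat{y} \in \widehat{G}$ of each $y \in Y$, and a few auxiliary products of these lifts needed to detect the quotient circular order.

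The first step is to check that any $\preceq' \in \LO(\widehat{G})$ agreeing with $\preceq_c$ on $X$ keeps $z$ positive and cofinal. Positivity is the single comparison $1 \prec' z$. For cofinality, the centrality of $z$ is essential: letting $C_z := \{g \in \widehat{G} : z^{-N} \prec' g \prec' z^N \text{ for some } N \in \N\}$, one checks that $C_z$ is a subgroup by combining left-invariance of $\preceq'$ with the fact that right multiplication by powers of $z$ preserves $\preceq'$ (since $z^N$ is central). Namely, if $a \prec' z^{N_a}$ and $b \prec' z^{N_b}$, then left-multiplying the second inequality by $a$ yields $ab \prec' a z^{N_b}$, and right-multiplying the first by $z^{N_b}$ yields $az^{N_b} \prec' z^{N_a + N_b}$. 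Since $X$ certifies that each generator $\hat{s} \in T$ lies in $C_z$, we conclude $C_z = \widehat{G}$, and $z$ is cofinal in $\preceq'$.

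The second step uses cofinality to descend $\preceq'$ to a circular order $c'$ on $G = \widehat{G}/\langle z\rangle$, by assigning to each triple of distinct cosets the cyclic order of its representatives in the $\preceq'$-fundamental domain $\{\hat{g} : 1 \preceq' \hat{g} \prec' z\}$. By including in $X$ sufficient comparisons among the chosen lifts $\hat{y}$, the powers $z^k$, and the products $\hat{y}_i \hat{y}_j$ for $y_i, y_j \in Y$, the agreement $\preceq'|_X = \preceq_c|_X$ pins down the representative of each $y \in Y$ in the fundamental domain and forces $c'|_{Y\times Y\times Y} = c|_{Y\times Y\times Y}$; by the defining property of $Y$, we conclude $c' = c$. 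The lift of a circular order to the central extension $\widehat{G}$ with a fixed positive cofinal generator $z$ is uniquely determined by the explicit construction recalled in \cite{zheleva,MR}, so $\preceq' = \preceq_c$.

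The main obstacle is the first step: cofinality of $z$ is a priori an infinitary condition, so it is not obvious that it can be encoded by finitely many comparisons. The argument above crucially uses both that $z$ is central (so right multiplication by its powers preserves the order) and that $\widehat{G}$ is finitely generated (so boundedness on a generating set propagates to every element via the multiplicative closure of $C_z$). This is precisely where the hypothesis that $G$ is finitely generated enters the proof.
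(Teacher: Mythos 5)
Your first two steps are correct, and they follow the right track: encoding positivity and cofinality of $z$ by finitely many inequalities on a generating set (using centrality of $z$ so that right multiplication by its powers preserves any left order) is a valid argument and is indeed where finite generation enters; and pinning down the $\preceq'$-canonical representatives of the elements of $Y$ does force the induced circular order $c'$ to agree with $c$ on triples from $Y$, hence $c'=c$. The genuine gap is your final sentence: it is \emph{not} true that a left order on $\widehat{G}$ in which $z$ is central, positive and cofinal is uniquely determined by the circular order it induces on $G=\widehat{G}/\langle z\rangle$; the map $\pi^*$ is not globally injective. For instance, on $\widehat{G}=\Z^2=\langle z\rangle\times\langle t\rangle$ the orders $\preceq_k$ ($k\in\Z$) whose positive cone is $\{z^at^b : a+kb>0\}\cup\{z^at^b : a+kb=0,\ b>0\}$ all have $z$ positive, central and cofinal and all induce the same circular order on $\Z$, yet they are pairwise distinct. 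More to the point for this paper: if $m\colon G\to\Z$ is any nontrivial homomorphism (these abound when $G$ is free), the automorphism of $\widehat{G}$ given by $\hat g\mapsto \hat g\, z^{m(\pi(\hat g))}$ fixes $z$ and induces the identity on $G$, and pulling $\preceq_c$ back along it yields a different linear order with $z$ positive and cofinal inducing the same $c$. So from ``$z$ positive and cofinal'' plus ``$c'=c$'' you cannot conclude $\preceq'=\preceq_c$.

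What is needed (and what the paper relies on --- it does not reprove the proposition but quotes \cite{MR}, Prop.~5.4, whose proof establishes that $\pi^*$ is continuous and \emph{locally} injective when $G$ is finitely generated) is local injectivity, and your finite set $X$ must be enlarged to exploit it. Concretely: include in $X$, for each generator $s$ of $G$, the element $\hat s$ with $id\preceq_c \hat s\prec_c z$, together with $id$ and $z$; agreement on $X$ then forces these same elements to be the $\preceq'$-canonical representatives of the generators (this is exactly what kills the $\mathrm{Hom}(G,\Z)$-twisting ambiguity above, since a homomorphism to $\Z$ is determined by its values on generators). Then one shows, by induction on word length, that the representative of $gh$ equals $\mathrm{rep}(g)\,\mathrm{rep}(h)\,z^{-\epsilon}$ with $\epsilon\in\{0,1\}$ determined by the circular order (whether the product ``overflows'' past $z$, read off from $c$ on the triple $id,g,gh$); hence $c'=c$ together with agreement of representatives on a generating set determines all representatives, therefore the positive cone, therefore $\preceq'=\preceq_c$. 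With this addition your outline becomes a complete proof, essentially the one in \cite{MR}.
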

\smallskip

Finally, we recall the definition of crossings. 

\begin{dfn}
Let $G$ be a group acting on a totally ordered space $(\Omega,\leq)$. The action has \emph{crossings} if there exist $f,g\in G$ and $u,v,w\in \Omega$ such that:
\begin{enumerate}
\item $u < w < v$.
\item $g^n u < v$ and $f^n v > u$ for every $n \in \N$, and
\item there exist $M, N$ in $\N$ such that $f^N v < w < g^M u$.
\end{enumerate}
In this case, we say that $f$ and $g$ are \emph{crossed}.
\end{dfn}

If $f$ and $g$ are crossed, then the graph of $\rho(f)$ and $\rho(g)$ in the dynamical realization is locally  given by the picture in Figure~\ref{fig:crossing}.

\begin{figure}
\[
\includegraphics[scale=.8]{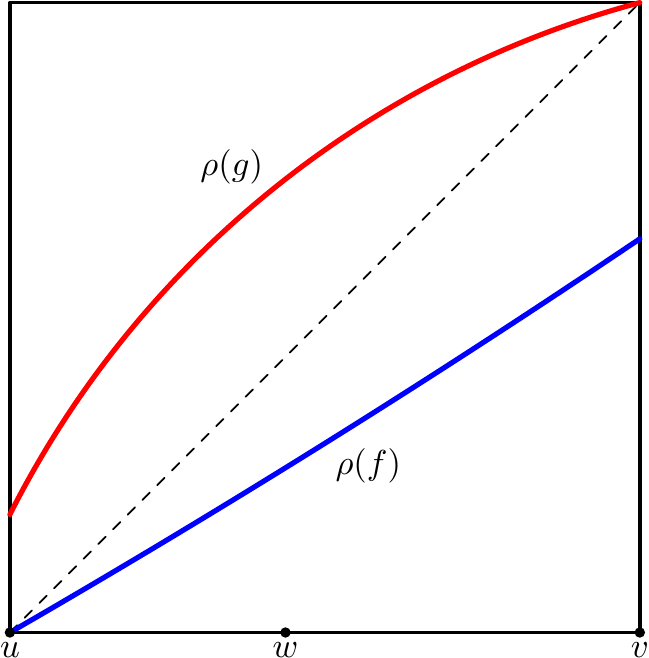}
\]
\caption{A crossing in the dynamical realization.}\label{fig:crossing}
\end{figure}

Our application of the notion of crossings will be through the following lemma. 
\begin{lem}[\cite{GOD} Cor.~3.2.28]
\label{l:crossings}
Let $C$ be a convex subgroup of $(G,\preceq)$ and suppose that the (natural) action of $G$ on $(G/C,\preceq_C)$ has no crossings. 
Then there exists a homomorphism $\tau:G\to \R$ with $C$ in its kernel. Moreover, if $C$ is the maximal convex subgroup of $(G,\preceq)$, then $C$ agrees with the kernel of $\tau$.
\end{lem}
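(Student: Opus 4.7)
My approach is to reduce to a version of Hölder's theorem (in the extension due to Conrad and Solodov) for order-preserving group actions on linearly ordered sets without crossings, and then to identify $\ker \tau$ by tracking monotonicity of the resulting semi-conjugacy.

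\emph{Existence of $\tau$.} Since $C$ is convex in $(G,\preceq)$, the left cosets partition $G$ into $\preceq$-convex subsets and the induced linear order $\preceq_C$ on $G/C$ is preserved by the left $G$-action $g\cdot hC = ghC$. By hypothesis this action has no crossings, so the Conrad--Solodov theorem (see Chapter 3 of \cite{GOD}) produces a homomorphism $\tau : G \to \R$ together with an order-preserving (monotone) map $\phi : G/C \to \R$ satisfying the intertwining relation
\[
\phi(g \cdot hC) \,=\, \tau(g) + \phi(hC) \qquad \text{for all } g,h \in G.
\]
Evaluating at the identity coset $C\in G/C$ and using that every $c\in C$ fixes $C$ gives $\phi(C) = \tau(c) + \phi(C)$, hence $\tau(c)=0$ for all $c \in C$. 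Thus $C \subseteq \ker \tau$, proving the first claim.

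\emph{The kernel identification under maximality.} Now assume $C$ is maximal convex. I first verify that $\ker \tau$ is convex in $(G,\preceq)$. Take $h,k\in \ker\tau$ and $g\in G$ with $h \preceq g \preceq k$. If $gC$ coincides with either $hC$ or $kC$, then $g$ differs from $h$ or $k$ by an element of $C \subseteq \ker \tau$, so $\tau(g)=0$. Otherwise, convexity of $C$ in $(G,\preceq)$ yields $hC \prec_C gC \prec_C kC$, monotonicity of $\phi$ gives $\phi(hC)\le\phi(gC)\le\phi(kC)$, and the intertwining relation translates this into $0=\tau(h)\le \tau(g)\le \tau(k)=0$. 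So $\ker\tau$ is a convex subgroup of $G$ containing $C$, and by maximality it is either $C$ or $G$. To rule out $\ker\tau = G$ (in the nontrivial case $C\neq G$), I would use maximality a second way: a nontrivial $G$-invariant convex subset of $G/C$ would pull back to a convex subgroup strictly between $C$ and $G$, contradicting maximality. Thus the action on $G/C$ is archimedean, and combined with the no-crossings hypothesis this upgrades the Conrad--Solodov semi-conjugacy to an order-embedding of the orbit $G\cdot \phi(C)$ into $\R$. In particular $\tau$ is nontrivial, so $\ker\tau = C$.

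\emph{Main obstacle.} The substantive input is the Conrad--Solodov theorem itself, whose proof translates the combinatorial ``no crossings'' condition into an archimedean comparison of orbit displacements, thereby producing the translation-number homomorphism. Once that machinery is in place, the rest of the argument is routine bookkeeping of the monotonicity of $\phi$.
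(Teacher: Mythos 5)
The paper does not actually prove this statement -- it is quoted from \cite{GOD} (Cor.~3.2.28) -- so the only comparison available is with the strategy of that source, and your route (translation-number homomorphism \`a la Conrad--H\"older--Solodov, plus bookkeeping with convex cosets) is indeed the same circle of ideas. The bookkeeping itself is fine: cosets of a convex subgroup are convex, so $h\prec g\prec k$ with $gC\notin\{hC,kC\}$ does give $hC\prec_C gC\prec_C kC$, and your computation that $\ker\tau$ is convex and contains $C$ is correct \emph{given} the black box you invoke.

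The gap is precisely where the ``moreover'' clause -- the only part of the lemma with content, and the only part the paper uses (to conclude $C$ is normal) -- gets established. First, the ``Conrad--Solodov theorem'' as you state it is vacuous: $\tau\equiv 0$ together with a constant $\phi$ satisfies $\phi(g\cdot hC)=\tau(g)+\phi(hC)$ for any action whatsoever, so nothing you derive from it rules out $\ker\tau=G$; all the work is in guaranteeing a nontrivial $\tau$, which is exactly the step you leave to a one-line ``upgrade.'' Second, the way you try to feed in maximality fails as written: since $G$ acts transitively on $G/C$, there are \emph{no} proper nonempty $G$-invariant subsets of $G/C$ at all, so ``no nontrivial invariant convex subset'' is true for every transitive action and cannot yield Archimedeanness. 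The correct object is not an invariant subset but a convex \emph{block}: a convex $X\subset G/C$ containing the base coset with $gX=X$ or $gX\cap X=\emptyset$ for all $g$; its stabilizer pulls back to a convex subgroup strictly between $C$ and $G$ (this is the action-space analogue of Lemma~\ref{r:convex}), and maximality excludes proper such blocks. Even after that repair, you still owe the two places where the no-crossings hypothesis genuinely enters: showing that the convexification of a cyclic orbit (or of the set of elements with bounded displacement) is such a block, and then the H\"older-type argument that an Archimedean, crossing-free action embeds the orbit equivariantly into $\R$, which is what makes $\tau$ nontrivial with $\ker\tau=\Stab_G(C)=C$. As it stands, the decisive implication is asserted rather than proved, resting on the misstated invariance reduction.
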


\subsection{Isolated linear orders on $F_n\times \Z$}  \label{sec_isoFnZ}

We now turn to our main goal of describing isolated linear orders on $F_n\times \Z$ and proving Theorem~\ref{LO_thm}.  We begin by reducing the proof to the statement of Proposition \ref{prop flexible} below.  

Since every central extension of $F_n$ by $\Z$ splits, Proposition~\ref{LO_central} tell us that $F_{2n}\times\Z$ admits isolated linear orders -- more precisely, any lift of an isolated order on $F_{2n}$ to $F_{2n}\times\Z$ will be isolated.  
Furthermore, if 
\[
0\to\Z  \to \hat{G} \overset{\pi}\rightarrow G\to 1
\]
is a central extension of $G$ by $\Z$ then any linear order $\preceq$ on $\hat{G}$ in which $\Z$ is cofinal gives a canonical circular order on $G$ as follows. Let $z$ be the generator of $\Z$ such that $z \succ id$.  Since $z$ is cofinal, for each $g \in G$, there exists a unique representative $\hat{g} \in \pi^{-1}(g)$ such that $id \preceq \hat{g} \prec z$. Given distinct elements $g_1, g_2, g_3 \in G$, let $\sigma$ be the permutation such that 
\[id \preceq \hat{g}_{\sigma(1)} \prec \hat{g}_{\sigma(2)} \prec \hat{g}_{\sigma(3)} \prec z.\]
   Define $\pi^*(\preceq)(g_1, g_2, g_3) := \mathsf{sign}(\sigma)$.  One checks that this is a well defined circular order on $G$.  In the proof of Proposition 5.4 of \cite{MR}, it is shown that $\pi^*$ is continuous and is locally injective when $G$ is finitely generated, which implies that an isolated linear order of $F_{2n+1}\times \Z$ with cofinal center induces an isolated circular order of $F_{2n+1}$ by this procedure. Since $F_{2n+1}$ has no isolated circular orders, by Theorem \ref{CO_thm}, to finish the proof of Theorem \ref{LO_thm} it is enough to show the following:

\begin{prop}\label{prop flexible} Let $F$ be a free group, and $\preceq$ a linear order on $G=F\times \Z$ in which the central factor is not cofinal. Then $\preceq$ is non-isolated.
\end{prop}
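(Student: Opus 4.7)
The plan is to leverage the convex closure of $\Z$ in $(G,\preceq)$ and reduce to known non-isolation results. Let $C$ denote this convex closure. By the hypothesis, $C\subsetneq G$ is a proper convex subgroup, and since $\Z$ is central, $C$ splits as $C = C_F\times \Z$ with $C_F = C\cap F$ a free subgroup of $F$. The first observation is that convexity of $C$ allows one to extend any left-invariant linear order $\preceq'_C$ on $C$ to a left-invariant order $\preceq'$ on $G$ by retaining the coset ordering on $G/C$ induced by $\preceq$ and substituting $\preceq'_C$ inside each coset. If $\preceq'_C$ is chosen to differ from $\preceq|_C$ but to agree on a large enough finite subset of $C$, then $\preceq'$ differs from $\preceq$ but agrees with it on any prescribed finite subset of $G$. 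Hence it suffices to show that $\preceq|_C$ is non-isolated in $\LO(C)$, aside from one case handled directly.

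I split by the rank of the free group $C_F$. If $C_F=\{1\}$, then $\Z$ itself is convex (and, being central, normal); the quotient $F=G/\Z$ inherits a left-invariant linear order, which by McCleary's theorem is non-isolated in $\LO(F)$, and lexicographic lifting yields a nearby distinct order on $G$. If $C_F\cong\Z$, then $C\cong \Z^2$ and $\LO(\Z^2)$ has no isolated points (Sikora), so $\preceq|_C$ is non-isolated and the extension argument concludes. The central case is $C_F$ free of rank $\geq 2$: then $\Z$ is cofinal in $(C,\preceq|_C)$ by definition of $C$, so $\preceq|_C$ corresponds via the map $\pi^*$ (from the discussion preceding Proposition~\ref{LO_central}) to a circular order $c\in\CO(C_F)$; continuity and local injectivity of $\pi^*$ imply that $\preceq|_C$ being isolated in $\LO(C)$ forces $c$ to be isolated in $\CO(C_F)$. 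When $c$ is non-isolated --- e.g.\ when $C_F$ has odd rank, by Theorem~\ref{CO_thm} --- we conclude.

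Assume for contradiction that $c$ is isolated (so $C_F$ has even rank $\geq 2$ by Theorem~\ref{CO_thm}) and $\preceq|_C$ is isolated in $\LO(C)$. By Lemma~\ref{l:chain}, the chain of convex subgroups of $(C,\preceq|_C)$ is finite. Cofinality of $\Z$ in $C$ forces every proper convex subgroup $H$ of $C$ to satisfy $H\cap\Z=\{1\}$, so $H$ embeds via the projection $C\to C_F$ as a free subgroup of $F$. Since such an $H$ is also convex in $(G,\preceq)$, the extension argument combined with McCleary forces $\preceq|_H$ to be isolated in $\LO(H)$, hence $H$ to have rank at most $1$. Apply Lemma~\ref{l:crossings} to the maximal proper convex subgroup $H\subsetneq C$: if the action of $C$ on $C/H$ has no crossings, a homomorphism $\tau:C\to\R$ with $\ker\tau=H$ exists, so $C/H$ is abelian, forcing $[C_F,C_F]\subseteq H$; but $[C_F,C_F]$ is a non-abelian free group of infinite rank while $H$ is at most infinite cyclic, a contradiction. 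So crossings must be present; by the classical flexibility arguments of \cite{NavasFourier,RivasLO,ABR}, this forces the dynamical realization of $\preceq|_C$ to be flexible, and Lemma~\ref{r:Flexible} then yields that $\preceq|_C$ is non-isolated in $\LO(C)$, completing the contradiction. The main step requiring external input is the classical passage from crossings in the dynamical realization to flexibility of that realization (hence non-isolation via Lemma~\ref{r:Flexible}), a technique implicit in the cited references but not stated as a standalone lemma in the excerpt.
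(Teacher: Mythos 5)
Your setup is sound and runs parallel to the paper's Claims 1--2: the convex closure $C$ of the central $\Z$ is exactly the convex subgroup the paper extracts dynamically, $z$ is cofinal in it, the extension-of-orders-on-convex-subgroups argument correctly reduces non-isolation of $\preceq$ to non-isolation of $\preceq|_C$ in the low-rank cases, and the use of $\pi^*$ together with Theorem~\ref{CO_thm} to force $C_F$ to have even rank $\geq 2$ is the same step the paper takes. Your observation that the maximal proper convex subgroup $H\subsetneq C$ has rank at most $1$, and that Lemma~\ref{l:crossings} then forces crossings in the action of $C$ on $C/H$, is also correct.

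The final step, however, is a genuine gap, and it is exactly where the real work lies. There is no ``classical passage from crossings to flexibility,'' and in this generality the implication is false: take an isolated circular order on $F_{2k}$ ($k\geq 1$) and lift it to $F_{2k}\times\Z$ as in Proposition~\ref{LO_central}. The resulting linear order is isolated, has cofinal center, and -- by your own argument -- its maximal proper convex subgroup has rank at most $1$, so Lemma~\ref{l:crossings} forces crossings in the action on the quotient; yet by Lemma~\ref{r:Flexible} its dynamical realization cannot be flexible, since the order is isolated. This shows more than a missing citation: under your contradiction hypothesis, $(C,\preceq|_C)$ is precisely a group of the form $F_{2k}\times\Z$ with an isolated order with cofinal center, which genuinely exists, so no argument using only data internal to $C$ (the subgroup $H$, crossings in $C/H$) can yield the contradiction you want; your reduction ``it suffices to show $\preceq|_C$ is non-isolated'' discards the information that makes the statement true. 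The paper instead derives the contradiction at the level of $G$: it shows $C$ has infinite index in $G$, hence $F^*$ is not normal in $F$, so (after passing to the smallest convex subgroup properly containing $C$) the $G$-action on $G/C$ has crossings and its collapsed action is minimal; then Claim~\ref{cl:flex} constructs, for every compact set $K$, a perturbation of the realization $\rho$ of $\preceq$ agreeing with $\rho$ on $K$ but not semi-conjugate to it. That construction must respect the relation with the center (the perturbed generators are built to commute with $\rho(z)$), uses the primitivity in $F$ of the perturbed generator and the maximality of $C$ to guarantee a suitable generator exists, and only then invokes Lemma~\ref{r:Flexible} -- applied to $\rho$ on $G$, not to the realization of $\preceq|_C$. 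This perturbation argument is the heart of the proof and cannot be replaced by the appeal to an implicit technique in \cite{NavasFourier,RivasLO,ABR}.
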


As a warm-up, as well as tool to be used in the proof, we start with a short proof of a special case.  
\begin{lem}\label{l:infinite_rank}
Let $F$ be a free group of infinite rank and $G=F\times \Z$. Then no order in $\LO(G)$ is isolated.
\end{lem}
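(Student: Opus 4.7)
The plan is to use a direct automorphism trick, leveraging the infinite rank of $F$ to ``hide'' a generator away from any finite test set. Suppose for contradiction that some $\preceq \in \LO(G)$ is isolated, and let $X \subset G$ be a finite set such that $U_{(\preceq, X)} = \{\preceq\}$. The basic idea is to manufacture a distinct order $\preceq'$ which agrees with $\preceq$ on $X$ by precomposing with an automorphism of $G$ that fixes $X$ pointwise but changes the sign of some carefully chosen generator.

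First, since each element of $X$ is a word in finitely many free generators of $F$ together with the central generator $z$, only finitely many free generators appear in the reduced expressions of elements of $X$. By the infinite rank assumption, there exists some free generator $a \in F$ disjoint from the support of $X$. Then let $\phi \colon G \to G$ be the automorphism determined by $\phi(a) = a^{-1}$, $\phi(a') = a'$ for every other free generator $a'$, and $\phi(z) = z$. This is well-defined because $F$ is freely generated, and by construction $\phi$ fixes every element of $X$ pointwise.

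Finally, I would define $\preceq'$ on $G$ by $g \preceq' h \iff \phi(g) \preceq \phi(h)$. Left-invariance is immediate from the fact that $\phi$ is a homomorphism, so $\preceq' \in \LO(G)$. Since $\phi$ fixes $X$ pointwise, the orders $\preceq$ and $\preceq'$ agree on $X$, so $\preceq' \in U_{(\preceq, X)}$. However, the pair $\{id, a\}$ is ordered oppositely by the two orders because $\phi(a) = a^{-1}$ reverses the sign of $a$ relative to $id$, so $\preceq' \neq \preceq$. This contradicts the hypothesis that $U_{(\preceq, X)} = \{\preceq\}$.

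There is no real obstacle: the infinite rank hypothesis is used only once, to extract the generator $a$ outside the support of $X$, and this is precisely the ingredient that will be missing in the finite rank case addressed by Proposition~\ref{prop flexible}, which therefore requires the more refined flexibility argument from Lemmas~\ref{r:convex}, \ref{l:chain}, and \ref{r:Flexible}.
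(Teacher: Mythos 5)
Your proof is correct. It is in essence the same trick as the paper's---change the sign of one free generator that is ``far away'' from the finite data determining the order---but your execution is genuinely different and more elementary: you work entirely algebraically, pulling back $\preceq$ under the automorphism $\phi\in\Aut(G)$ that inverts the generator $a$ and fixes the remaining generators and $z$, and you get a contradiction from a single new order $\preceq'$ agreeing with $\preceq$ on $X$. The paper instead passes through the dynamical realization $\rho_0$ of $\preceq$, replaces $\rho_0(f_n)$ by $\rho_0(f_n)^{-1}$ for each $n$, and argues that the induced orders $\preceq_n$ are pairwise distinct (via non-semi-conjugacy and freeness of the orbit of the basepoint) and converge to $\preceq$. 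In fact the two arguments produce the same objects: the order induced by the modified action $\rho_n$ at the basepoint is exactly the pullback of $\preceq$ under the automorphism inverting $f_n$, so your version can be read as an algebraic reformulation that bypasses the dynamical realization, the verification that the orbit is free, and the semi-conjugacy discussion. What the paper's phrasing buys is consistency with the dynamical framework (semi-conjugacy, flexibility, Lemma~\ref{r:Flexible}) that is genuinely needed in the finite-rank case of Proposition~\ref{prop flexible}, where, as you note, no generator can be hidden from a finite test set; what your phrasing buys is brevity and the observation that the statement is really about the action of $\Aut(G)$ on $\LO(G)$ accumulating at every order when the rank is infinite.
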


\begin{proof}
Let $f_1,f_2,\ldots$ be a set of free generators of 
the free factor $F$ and $g$ the generator of the central factor $\Z$.
Let $\preceq$ be any order on $G$ and $\rho_0$ a dynamical realization with basepoint $x$.
For any fixed $n\in \N$, we can define a representation $\rho_n:G\to \Homeo_+(\R)$ by setting
\[\rho_n(g)=\rho_0(g),\quad
\rho_n(f_k)=\begin{cases}
\rho_0(f_k) &\text{if }k\neq n,\\
\rho_0(f_n)^{-1}&\text{if }k=n.
\end{cases}
\] 
It is easy to see that the orbit of $x$ is free for all the actions $\rho_n$, and that no two distinct representations $\rho_n$ and $\rho_m$ are semi-conjugate one to another.  Thus, they determine distinct orders $\preceq_n$; and these orders converge to $\preceq$ in $\LO(G)$ as $n\to\infty$.
\end{proof}

\setcounter{claim}{0}

\begin{proof}[Proof of Proposition \ref{prop flexible}]
We have already eliminated the case where $F$ has infinite rank.  If $F$ has rank one, then $F \times \Z$ is abelian, and 
so admits no isolated orders (see~\cite{sikora}).  
So from now on we assume that the rank of $F$ is finite and at least $2$.  

Looking for a contradiction, suppose that $\preceq$ is a linear order on $G = F\times \Z$ which is isolated, and in which the center is not cofinal.  Let $\rho$ be its dynamical realization, and let $z$ be a generator of the central $\Z$ subgroup.  By Remark \ref{r:cofinal}, $\rho(z)$ acts with fixed points.  Moreover since $\Z$ is central, the set of fixed points of $\rho(z)$ is $\rho(G)$-invariant.  Since $\rho(G)$ has no global fixed point, this implies that $\rho(z)$ has fixed points in every neighborhood of $+\infty$ and of $-\infty$.  

We now find a convex subgroup in which $z$ is cofinal.   Let $I$ denote the connected component of $\R\setminus \Fix(\rho(z))$ that contains the basepoint $x_0$ of~$\rho$, so in particular $I$ is a bounded interval.  Let $C=\stab{G}{I}$. 

\begin{claim}
$C$ is a convex subgroup of $(G,\preceq)$ and $z$ is cofinal in $(C,\preceq)$.
\end{claim}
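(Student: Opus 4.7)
The plan is to prove the two assertions separately: convexity of $C$ will follow from Lemma~\ref{r:convex} combined with the centrality of $z$, while cofinality of $z$ inside $C$ will follow from the dynamical characterization of cofinality (Remark~\ref{r:cofinal}).

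For convexity, the key observation is that $\Fix(\rho(z))$ is $\rho(G)$-invariant, since $z$ lies in the center of $G$: if $y\in\Fix(\rho(z))$ then $\rho(z)\rho(g)(y)=\rho(g)\rho(z)(y)=\rho(g)(y)$ for every $g\in G$. Hence $\rho(G)$ permutes the connected components of $\R\setminus\Fix(\rho(z))$, so that for every $g\in G$ we have either $\rho(g)(I)=I$ or $\rho(g)(I)\cap I=\emptyset$; that is, $I$ satisfies property~\eqref{interval}. Since $\rho$ is the dynamical realization of $\preceq$ with basepoint $x_0\in I$, the converse direction of Lemma~\ref{r:convex} yields at once that $C=\Stab_G(I)$ is convex in $(G,\preceq)$.

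For the cofinality of $z$ in $(C,\preceq)$, I will exploit that $\rho(z)|_I$ has no fixed points. This forces $\rho(z)|_I$ to be strictly monotone; up to replacing $z$ by $z^{-1}$, I may assume it moves points of $I$ to the right, so that the orbit $\{\rho(z^n)(x_0)\}_{n\in\Z}$ accumulates at both endpoints of $I$. For any $h\in C$, the stabilizing property $\rho(h)(I)=I$ gives $\rho(h)(x_0)\in I$, hence there exist integers $m\le n$ with $\rho(z^m)(x_0)\le\rho(h)(x_0)\le\rho(z^n)(x_0)$. Since the map $g\mapsto\rho(g)(x_0)$ is order-preserving on $G$ (because $\rho$ is the dynamical realization of $\preceq$), this translates to $z^m\preceq h\preceq z^n$, and cofinality follows.

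The argument is a short bookkeeping exercise once Lemma~\ref{r:convex} and Remark~\ref{r:cofinal} are available. The only subtle point to flag is that although $z$ fails to be cofinal in the whole group $G$ by hypothesis, the connected-component construction of $I$ is precisely designed to carve out a convex subgroup in which $z$ does become cofinal; I do not anticipate a substantive obstacle in executing the plan.
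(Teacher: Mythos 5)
Your proof is correct, and the convexity half takes a genuinely different (and somewhat slicker) route than the paper's. The paper argues convexity by hand: given $h,k\in C$ and $h\preceq g\preceq k$, it translates the inequality $\rho(h)(x_0)<\rho(g)(x_0)<\rho(k)(x_0)$ by central powers $z^n$, lets $n\to\pm\infty$ so that $\rho(z^n)(x_0)$ converges to the two endpoints of $I$ (which $\rho(h)$ and $\rho(k)$ fix), and concludes that $\rho(g)$ fixes both endpoints, hence $g\in C$. You instead observe that centrality makes $\Fix(\rho(z))$ a $\rho(G)$-invariant set, so $\rho(G)$ permutes the components of its complement and $I$ satisfies property~\eqref{interval}; then the converse direction of Lemma~\ref{r:convex}, applied at the basepoint $x_0\in I$ (whose induced order is $\preceq$ itself, by definition of the dynamical realization), gives convexity of $C=\Stab_G(I)$ at once. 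Both arguments hinge on the same centrality input; yours buys brevity by outsourcing the order-theoretic bookkeeping to the cited lemma, while the paper's direct limit argument is self-contained and incidentally re-proves that converse in this special case. For cofinality, your explicit verification (the $\rho(z)$-orbit of $x_0$ accumulates at both endpoints of $I$, every $\rho(h)(x_0)$ with $h\in C$ lies in $I$, and $g\mapsto\rho(g)(x_0)$ is order-preserving) is just an unwinding of the paper's one-line appeal to Remark~\ref{r:cofinal}, so that half is essentially the same.
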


\begin{proof}[Proof of Claim]
If $h,k\in C$ and $g\in G$ satisfy  $\rho(h)(x_0)<\rho(g)(x_0)<\rho(k)(x_0)$, 
then for any $n$ we also have $\rho(z^nh)(x_0)<\rho(z^ng)(x_0)<\rho(z^nk)(x_0)$.  Since $z$ is central, this implies 
\begin{equation}\label{eq:convex}
\rho(hz^n)(x_0)<\rho(gz^n)(x_0)<\rho(kz^n)(x_0).
\end{equation}
Up to replacing $z$ with $z^{-1}$, without loss of generality we may assume that $\rho(z)(x_0)>x_0$. Thus, as $n\to\infty$, the sequence of points $\rho(z^{n})(x_0)$ converges to the rightmost point of $I$, which is fixed by both $\rho(h)$ and $\rho(k)$. We deduce from  \eqref{eq:convex} that $\rho(g)$ also fixes this point. Similarly, considering the limit in \eqref{eq:convex} as $n\to-\infty$ shows that the leftmost point of $I$ is fixed by $\rho(g)$. Hence $g\in C$, which shows $C$ is convex.  
Finally, by Remark~\ref{r:cofinal}, the fact that $\rho(z)$ has no fixed points in $I$ implies that $z$ is cofinal in~$(C,\preceq)$.
\end{proof}

Since $(G, \preceq)$ is isolated and $C$ is convex, the restriction of $\preceq$ to $C$ is also an isolated order on $C$.  
Additionally, the fact that $\preceq$ is isolated implies, by Lemma~\ref{l:chain}, that the chain of convex subgroups of $G$ is \emph{finite}.   Let $G'$ denote the smallest convex subgroup properly containing $C$.   Since $\Z \subset C$, we have that $G'$ is also a direct product of $\Z$ and a free group (a subgroup of $F$), and again, our assumptions on $\preceq$ imply that (the restriction of) $\preceq$ is an isolated left-order on $G'$ in which $\Z$ is not cofinal.    Thus, we may work from now on with $G'$ instead of $G$.  Equivalently -- and, for notational convenience, this is how we will proceed -- we may assume that $C$ is the \emph{maximal} convex subgroup of $G$.  

For our next claim observe that this maximal convex subgroup $C$ also admits a decomposition of the form $F^*\times\Z$, where $F^*$ is a subgroup of $F$. 

\begin{claim}\label{cl:F*}
$F^*$ is a non trivial free group of even rank.
\end{claim}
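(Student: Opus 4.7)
The plan is to break the claim into three sub-steps: freeness (with finite rank), non-triviality, and parity.

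First, since $F^*\subseteq F$ is a subgroup of a free group, I would invoke Nielsen--Schreier to conclude that $F^*$ is free. Since $C$ is convex in $(G,\preceq)$ and $\preceq$ is isolated in $\LO(G)$, the restriction $\preceq|_C$ must be isolated in $\LO(C)=\LO(F^*\times\Z)$. Applying Lemma~\ref{l:infinite_rank} would then rule out $F^*$ having infinite rank, so $F^*$ would be finitely generated. For non-triviality, I would argue by contradiction: assume $F^*=\{e\}$, so that $C=\langle z\rangle$ is central in $G$, and hence normal. Then $G/C$ inherits a left-invariant group order $\preceq_C$, and since $C$ is the \emph{maximal} convex subgroup of $(G,\preceq)$, the quotient $(G/C,\preceq_C)$ has no non-trivial proper convex subgroups (convex subgroups of $G/C$ correspond bijectively to convex subgroups of $G$ containing $C$). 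Thus $(G/C,\preceq_C)$ would be archimedean, and by H\"older's theorem it would embed order-preservingly into $(\R,+)$, forcing it to be abelian. But $G/C\cong F$ is non-abelian since $\mathrm{rank}(F)\geq 2$: a contradiction. Hence $F^*\neq\{e\}$.

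For even rank, I would use that $z$ is cofinal in $C$ by the previous claim, and that $\preceq|_C$ is an isolated linear order on $C=F^*\times\Z$ with cofinal center. Applying the $\pi^*$ map from the discussion preceding Proposition~\ref{prop flexible} to the central extension $0\to\langle z\rangle\to C\to F^*\to 1$, which is locally injective since $F^*$ is finitely generated, would convert $\preceq|_C$ into an isolated circular order on $F^*=C/\langle z\rangle$. Theorem~\ref{CO_thm} would then force the rank of $F^*$ to be even.

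The main obstacle will be the non-triviality step. H\"older's theorem applies cleanly here precisely because the normality of $C=\langle z\rangle$ is automatic in the sub-case $F^*=\{e\}$, where $C$ coincides with the center of $G$; without this, one would have to worry that $F^*$ need not be normal in $F$. An alternative route via Lemma~\ref{l:crossings} combined with a perturbation-flexibility argument \`a la Lemma~\ref{r:Flexible} would also work, but is substantially more delicate; exploiting the centrality of $z$ is what makes the direct H\"older argument tractable.
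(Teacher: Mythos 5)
Your first and third steps coincide with the paper's proof: convexity of $C$ makes $\preceq\vert_C$ isolated, Lemma~\ref{l:infinite_rank} rules out infinite rank, and the parity of the rank follows from cofinality of $z$ in $C$ together with the $\pi^*$ discussion at the beginning of Section~\ref{sec_isoFnZ} and Theorem~\ref{CO_thm}. The gap is in your non-triviality step. If $F^*=\{e\}$ then indeed $C=\langle z\rangle$ is normal and, by maximality, $(G/C,\preceq_C)$ has no proper non-trivial convex subgroups; but for \emph{left}-invariant orders this does not imply that the order is Archimedean. That implication holds for bi-invariant (more generally, Conradian, i.e.\ crossing-free) orders, where the set of elements dominated by the powers of a fixed positive element is a convex subgroup; for a general left order this set need not be a subgroup at all. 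Concretely, the left order on $\mathrm{BS}(1,2)=\Z[1/2]\rtimes\Z$ induced by its affine action on $\R$ at a point with trivial stabilizer has no proper non-trivial convex subgroups (no bounded interval or half-line satisfies \eqref{interval}, since suitable dyadic translations move it to an overlapping, distinct interval), yet it is non-Archimedean and the group is non-abelian. In particular, a left order on $G/C\cong F$ with no proper convex subgroups is not by itself a contradiction, so H\"older's theorem cannot be invoked at that point; the paper's tool for this kind of conclusion is Lemma~\ref{l:crossings}, whose hypothesis of \emph{no crossings} is precisely what your argument is missing, and centrality of $z$ does nothing to supply it.

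The paper handles non-triviality differently: if $F^*$ were trivial, then $z$ acts trivially after collapsing the $G$-orbit of $I$, so the dynamical realization of $\preceq$ is semi-conjugate to an action of the free group $F$; since free groups admit no isolated orders, such an action is easily perturbed, and Lemma~\ref{r:Flexible} then contradicts the isolation of $\preceq$. (An algebraic variant that stays close to your set-up: when $C$ is convex and normal, $\preceq$ is recovered from $\preceq\vert_C$ and the quotient order on $G/C$ via the extension procedure of Section~\ref{s:LOpre}, so perturbing the quotient order perturbs $\preceq$; isolation of $\preceq$ would therefore force an isolated order on $G/C\cong F$, contradicting \cite{McCleary2, NavasFourier}.) Either route replaces the Archimedean claim, which as stated does not hold.
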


\begin{proof}[Proof of Claim]  Since the restriction of $\preceq$ to $C=F^*\times \Z$ is isolated, as before, Lemma~\ref{l:infinite_rank} implies that $F^*$ cannot have infinite rank.   If $F^*$ were trivial, then the action of $G$ would be semi-conjugate to an action of $F$, thus making very easy to perturb the action of $F\times \Z$ and thus the order $\preceq$ (recall that free groups have no isolated orders \cite{McCleary2}).  Thus, $F^*$ is a nontrivial free group of finite rank, and as $z$ is cofinal in the ordering in $C$, its rank must be even (c.f. the remarks at the beginning of Section \ref{sec_isoFnZ}).  
\end{proof}

\begin{claim}\label{InfiniteIndex}
$C$ has infinite index in $G$.
\end{claim}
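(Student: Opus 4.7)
The plan is to argue by contradiction: suppose $C$ has finite index in $G$. The point is to use the induced order on the quotient coset space and note that it is too rigid to admit a nontrivial action by order-preserving bijections, which will force $C=G$ and thereby contradict the hypothesis on cofinality.

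More precisely, because $C$ is convex, the order $\preceq$ descends to a well-defined total order $\preceq_C$ on $G/C$ (as recalled in Section~\ref{s:LOpre}). Left-invariance of $\preceq$ then implies that the natural left action of $G$ on $G/C$ by translation is order-preserving. Under the assumption that $[G:C]<\infty$, the space $(G/C,\preceq_C)$ is a finite linearly-ordered set, and so its only order-preserving bijection is the identity. Consequently every element of $G$ must fix every coset. But left translation acts transitively on $G/C$, so the set of cosets consists of a single element, that is $C=G$.

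Finally, by the previous claim $z$ is cofinal in $(C,\preceq)$, so $C=G$ would mean that $z$ is cofinal in $(G,\preceq)$, contrary to the standing assumption of Proposition~\ref{prop flexible} that the central factor $\Z$ is not cofinal. This contradiction shows that $C$ has infinite index in $G$. The only subtlety to check is that $\preceq_C$ is indeed a total order invariant under the $G$-action, which is immediate from convexity of $C$ and left-invariance of $\preceq$; I do not foresee any real obstacle in writing this out, since everything has already been set up in Section~\ref{s:LOpre}.
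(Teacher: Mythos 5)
Your proof is correct, but it takes a different route from the paper. You argue purely order-theoretically: since $C$ is convex, the coset space $(G/C,\preceq_C)$ is totally ordered and $G$ acts on it by order-preserving bijections, so if $[G:C]<\infty$ this finite chain admits only the identity as an order-automorphism, forcing $C=G$; this contradicts the fact that $z$ is cofinal in $(C,\preceq)$ but, by the standing hypothesis of Proposition~\ref{prop flexible} (which the paper checks is preserved under the reduction to the maximal convex subgroup), not cofinal in $(G,\preceq)$. In effect you prove the general fact that a proper convex subgroup of a left-ordered group can never have finite index, using no dynamics at all. The paper instead exploits the dynamical realization already in place: if $C=\stab{G}{I}$ had finite index, the $G$-orbit of the bounded interval $I$ would be a finite union of bounded intervals, hence bounded, and its supremum would be a global fixed point of the dynamical realization, which is impossible. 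Your argument is more elementary and self-contained (it only uses the material recalled in Section~\ref{s:LOpre}); the paper's is a one-line consequence of the dynamical setup it has already built. Both are complete, and your handling of the needed fact $C\neq G$ (via non-cofinality of $z$) is correct.
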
 

\begin{proof}[Proof of Claim]
If $C$ had finite index, then the $G$-orbit of the interval $I$ would be bounded. This would imply that the dynamical realization has a global fixed point, which is absurd.
\end{proof}

Since every nontrivial normal, infinite index subgroup of $F$ has infinite rank, we conclude from Claims~\ref{cl:F*} and~\ref{InfiniteIndex} that $F^*$ (and thus $C$) is not a normal subgroup of $G$.  Lemma~\ref{l:crossings} thus implies that the action of $G$ on $(G/C,\preceq_C)$ has crossings, as otherwise $C$ would be normal.
In particular, if we collapse $I$ and its $G$-orbit, we obtain a semi-conjugate action $\bar \rho: G\to \Homeo_+(\R)$ which is minimal and has crossings.    Using this observation, we now prove the following claim.   
 
\begin{claim}\label{cl:flex}
For any compact set $K \subset \R$, there exists $\rho'$ agreeing with $\rho$ on $K$, but not semi-conjugate to~$\rho$. 
\end{claim}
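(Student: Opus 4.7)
The plan is to produce, for any compact $K\subset\R$, perturbations $\rho'$ of $\rho$ supported entirely outside $K$ that lie in a distinct semi-conjugacy class from $\rho$; taken together with Lemma~\ref{r:Flexible}, this will contradict the isolation of $\preceq$ and complete the proof of Proposition~\ref{prop flexible}. The key structural observation is that, because $z$ is not cofinal, $\R\setminus\Fix(\rho(z))$ decomposes into countably many bounded intervals, which are precisely the $G$-translates of $I$, indexed by $G/C$. Since $C$ has infinite index and $\rho$ has no global fixed point, these translates accumulate at both $\pm\infty$, so for any prescribed generator $a\in F$ we can find $g_0\in G$ such that both $\tilde J:=\rho(g_0)I$ and $\rho(a^{-1})\tilde J$ are disjoint from $K$.

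I construct the perturbation using the combined freedom of the free factor $F$ and the commutativity with $\rho(z)$. Pick $\psi\in\Homeo_+(\R)$ supported in $\tilde J$ and commuting with $\rho(z)\vert_{\tilde J}$; such maps form a large family, since $\rho(z)\vert_{\tilde J}$ is conjugate to a translation and its centralizer surjects onto $\Homeo_+(\T)$ via the quotient circle $\tilde J/\langle \rho(z)\rangle\cong \T$. Define $\rho'(a):=\psi\circ\rho(a)$ and $\rho'(s):=\rho(s)$ for every other generator $s$ of $G$. The relation $[a,z]=1$ is preserved, and since $F$ is free no further relations must be checked, so $\rho'$ is a genuine representation of $G=F\times\Z$. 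The set of points where $\rho'$ differs from $\rho$ (for each generator) is contained in $\tilde J\cup\rho(a^{-1})\tilde J$, hence disjoint from $K$, so $\rho'(s)\vert_K=\rho(s)\vert_K$ for every generator $s$.

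The main obstacle is verifying that $\psi$ can be chosen so that $\rho'$ is not semi-conjugate to $\rho$. The plan is to exploit the minimality and crossings of $\bar\rho$. Any semi-conjugacy $h\colon \R\to\R$ from $\rho$ to $\rho'$ must commute with $\rho(z)$ (since $\rho'(z)=\rho(z)$) and with $\rho(b)$ for every generator $b\neq a$. Provided the subgroup $H=\langle S\setminus\{a\},z\rangle$ still acts with crossings on $G/C$ -- which must hold for at least one choice of $a$, since otherwise the induced action of $G$ on $G/C$ would factor through a single generator, violating the richness guaranteed by minimality of $\bar\rho$ -- classical rigidity for groups acting with crossings forces $h$ to be a bijective conjugacy between $\rho\vert_H$ and itself on the relevant invariant regions. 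Restricting to $\tilde J$ and passing to the quotient circle $\tilde J/\langle\rho(z)\rangle\cong\T$, the induced map must intertwine $\psi$ with the identity, and therefore preserves the rotation number of $\psi$. Since we are free to choose $\psi$ with any rotation number in $\R/\Z$, the resulting $\rho'$ realizes a semi-conjugacy class distinct from that of $\rho$. The delicate point, where the crossings truly enter, is ruling out the alternative possibility that $h$ collapses $\tilde J$ to a point: this would force collapsing of the entire $G$-orbit of $\tilde J$, contradicting the minimality of $\bar\rho$ and the nontriviality of the quotient action. Once this is established, letting $K$ exhaust $\R$ produces perturbations arbitrarily close to $\rho$ in the compact-open topology that are non-semi-conjugate to $\rho$, so Lemma~\ref{r:Flexible} applies and closes the argument.
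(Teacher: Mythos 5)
Your construction of the perturbation itself is essentially sound: $\tilde J=\rho(g_0)I$ is $\rho(z)$-invariant, so a $\psi$ supported in $\tilde J$ and commuting with $\rho(z)\vert_{\tilde J}$ commutes with $\rho(z)$ globally, $\rho'$ is a representation of $F\times\Z$, and choosing $\tilde J$ disjoint from $K\cup\rho(a)(K)$ makes $\rho'$ agree with $\rho$ on $K$ (a minor inaccuracy: not every component of $\R\setminus\Fix(\rho(z))$ need be a $G$-translate of $I$, but you only use the translates). The genuine gap is in the non-semi-conjugacy step. First, your claim that for some generator $a$ the subgroup $H=\langle S\setminus\{a\},z\rangle$ acts with crossings on $G/C$ is unjustified, and it fails outright in the case the proof of Proposition~\ref{prop flexible} must cover, namely $F$ of rank $2$: there $H$ is abelian, and crossed elements generate a free semigroup, so commuting homeomorphisms are never crossed. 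The fallback justification (``otherwise the action of $G$ on $G/C$ would factor through a single generator'') is not a consequence of Lemma~\ref{l:crossings} or of minimality of $\bar\rho$. Second, even when $H$ does have crossings, the appeal to ``classical rigidity'' to force a semi-conjugacy $h$ that commutes with $\rho\vert_H$ to be a bijective conjugacy would require at least minimality (and some expansivity) of the \emph{$H$-action} on the relevant region, which you have not established; and even granting that, the conclusion that $h$ descends to $\tilde J/\langle\rho(z)\rangle$ and ``intertwines $\psi$ with the identity'' does not follow, since $\rho(a)$ need not preserve $\tilde J$ and $\psi$ is not an element of $\rho'(G)$, so its rotation number on that quotient circle is not an evident semi-conjugacy invariant of $\rho'$. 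As written, the argument that $\rho'$ is not semi-conjugate to $\rho$ is therefore missing.

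For comparison, the paper's proof avoids any rigidity statement. If some primitive $a\in F$ has a fixed point $p$ outside $K$ (which can be taken to be a common fixed point with $\rho(z)$), it modifies $\rho(a)$ only on $[p,\infty)$, by maps commuting with $\rho(z)$, in two different ways: in one perturbation every point of $[p,\infty)$ is moved weakly to the right, in the other weakly to the left. These two perturbations are not semi-conjugate to each other, hence at least one is not semi-conjugate to $\rho$, and both agree with $\rho$ on $K$. Crossings and minimality of $\bar\rho$, together with maximality of $C$ via Lemma~\ref{r:convex}, enter only in the remaining case, to produce (again by a modification supported outside $K$ and commuting with $\rho(z)$) a primitive element with a fixed point outside $K$, reducing to the first case. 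If you want to rescue your rotation-number idea, the natural place to do it is inside the stabilizer $g_0Cg_0^{-1}$ of $\tilde J$, where the circle action underlying Claim~\ref{cl:F*} lives, and you would need to exhibit a genuine semi-conjugacy invariant of the $G$-action that your $\psi$ changes; that is a substantially more delicate argument than the one proposed.
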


\begin{proof}[Proof of Claim]
Fix a compact set $K$. We will modify the action of $F$ outside $K$ to produce an action of $G$ that is not semi-conjugate to $ \rho$.

Suppose as an initial case that there is a primitive element  (i.e.~a generator in some free generating set) $a$ of $F$ such that $\rho(a)$ has a fixed point $p \notin K$.  Without loss of generality, assume $p$ is to the right of $K$, the other case is completely analogous.   
Since $\Fix(\rho(a))$ is $\rho(z)$-invariant, and $\rho(z^n)(p)$ is bounded and accumulates at a fixed point of $\rho(z)$, we may also assume without loss of generality that we have chosen $p$ to be a common fixed point of $\rho(z)$ and $\rho(a)$.  

We now define $a_+$ and $a_- \in \Homeo_+(\R)$ which commute with $\rho(z)$, and have the property that $a_+(x) \geq x$ for all $x \geq p$, and $a_- \circ \rho(a)(x) \leq x$ for all $x \geq p$.
For this, let $J$ be any connected component of $(\R \setminus \Fix(\rho(z))) \cap [p, \infty)$.
Suppose first that $J$ contains a point of $\Fix(\rho(a))$. The fact that $a$ and $z$ commute means that the endpoints of $J$ are preserved by $\rho(a)$.  Then define the restriction of $a_+$ to $J$ to agree with $\rho(z)$ if $\rho(z)(x) > x$ on $J$, or with $\rho(z^{-1})$ if $\rho(z)(x) < x$ on $J$.
If $J$ contains no point of $\Fix(\rho(a))$, then $J \subseteq J'$ where $J'$ is a connected component of $\R \setminus \Fix(\rho(a)) \cap [p, \infty)$, and we may define $a_+$ to agree with $\rho(a)$ or $\rho(a^{-1})$ there, so as to satisfy $a_+(x)>x$ for $x\in J$.  
Lastly, set $a_+(x)=x$ for any $x\in \Fix(\rho(z))\cap [p,\infty)$.
 The definition of $a_-$ is analogous.  
Let $\rho_\pm$ be the actions obtained by replacing the action of $\rho(a)$ by that of $a_\pm$ on $[p, \infty)$ and leaving the other generators unchanged.  Since $a_+$ and $a_-$ commute with $\rho(z)$, this defines representations of $G$, and clearly $\rho_+$ and $\rho_-$ are not semi-conjugate. 


We are left to deal with the case where no primitive element of $F$ has a fixed point outside $K$.  In this case, we will perturb the action $\rho$ to obtain a primitive element with a fixed point outside $K$, and hence a non-semi-conjugate action.  To do this, we use the fact that the semi-conjugate action $\bar \rho$ has crossings and is minimal.  Minimality implies that crossings can be found outside any compact set and, thus, for any compact $K\subset \R$ there is $g\in G$ such that $\R\setminus \Fix(\rho(g))$ has a component outside (and on the right of) $K$. Let $J=(j_0,j_1)$ denote one of those components. 

Notice that some primitive element $a \in F$ has the property that $\rho(a)(j_0)\in J$, but $\rho(a)(j_1)\notin J$.  For if this was not the case, $J$ would satisfy property \eqref{interval} and as observed in Remark~\ref{r:convex}, $G$ would then have a convex subgroup properly containing a conjugate of $C$.  Since $C$ was assumed maximal, this is impossible.      

Fix a primitive element $a$ with the property above, and let $\bar g$ be the homeomorphism defined as the identity outside $J$ and agreeing with $\rho(g)$ on $J$. Define $\rho^{\bar g}$ by 
\[\rho^{\bar g}(a)= \bar{g}\rho(a), \text{ and } \rho^{\bar g}(b)=\rho(b) \text{  for any other generator of $F$, and } \rho^{\bar g}(z) = \rho(z).\]

Since $\bar g$ commutes with $\rho(z)=\rho^{\bar g}(z)$, the new action $\rho^{\bar g}$ is a representation of $G$. Moreover, by changing $\bar g$ by some power if necessary, we have that $\rho^{\bar g}(a)$ has a fixed point in $J$. This ends the proof of Claim~\ref{cl:flex}.\end{proof}

To finish the proof of Proposition \ref{prop flexible} (and thus that of Theorem \ref{LO_thm}), we note that the \emph{flexibility} of $\rho$ from Claim~\ref{cl:flex} together with the statement of Lemma~\ref{r:Flexible} implies that the order is non-isolated, giving the desired contradiction.
\end{proof}

\section*{Acknowledgments}

The authors thank Yago Antol\'in for suggesting Corollary~\ref{cor:marked}.
K.M.~was partially supported by NSF grant DMS-1606254. C.R.~was partially supported by FONDECYT 1150691. M.T.~was partially supported by PEPS -- Jeunes Chercheur-e-s -- 2017 (CNRS), Projet ``Jeunes G\'eom\'etres'' of F.~Labourie (financed by the Louis D.~Foundation) and the R\'eseau France-Br\'esil en Math\'ematiques.

\bibliographystyle{plainurl}
\begin{bibdiv}
\begin{biblist}

\bib{ABR}{article}{
      author={Alonso, J.},
      author={Brum, J.},
      author={Rivas, C.},
       title={Orderings and flexibility of some subgroups of
  {$Homeo_+(\mathbb{R})$}},
        date={2017},
        ISSN={0024-6107},
     journal={J. Lond. Math. Soc. (2)},
      volume={95},
      number={3},
       pages={919\ndash 941},
      review={\MR{3664524}},
}

\bib{markov-partition-vfree}{article}{
      author={Alvarez, S.},
      author={Barrientos, P.},
      author={Filimonov, D.},
      author={Kleptsyn, V.},
      author={Malicet, D.},
      author={Meni\~no, C.},
      author={Triestino, M.},
       title={Maskit partitions and locally discrete groups of real-analytic
  circle diffeomorphisms},
        note={In preparation},
}

\bib{Arora-McCleary}{article}{
      author={Arora, A.K.},
      author={McCleary, S.H.},
       title={Centralizers in free lattice-ordered groups},
        date={1986},
        ISSN={0362-1588},
     journal={Houston J. Math.},
      volume={12},
      number={4},
       pages={455\ndash 482},
      review={\MR{873641}},
}

\bib{BS}{article}{
      author={{Baik}, H.},
      author={{Samperton}, E.},
       title={{Spaces of invariant circular orders of groups}},
     journal={Groups Geom. Dyn.},
        note={To appear},
}

\bib{Calegari}{inproceedings}{
      author={Calegari, D.},
       title={Circular groups, planar groups, and the {E}uler class},
        date={2004},
   booktitle={Proceedings of the {C}asson {F}est},
      series={Geom. Topol. Monogr.},
      volume={7},
   publisher={Geom. Topol. Publ., Coventry},
       pages={431\ndash 491},
         url={http://dx.doi.org/10.2140/gtm.2004.7.431},
      review={\MR{2172491}},
}

\bib{CC1}{article}{
      author={Cantwell, J.},
      author={Conlon, L.},
       title={Foliations and subshifts},
        date={1988},
        ISSN={0040-8735},
     journal={Tohoku Math. J. (2)},
      volume={40},
      number={2},
       pages={165\ndash 187},
         url={http://dx.doi.org/10.2748/tmj/1178228024},
      review={\MR{943817}},
}

\bib{CC2}{article}{
      author={Cantwell, J.},
      author={Conlon, L.},
       title={Leaves of {M}arkov local minimal sets in foliations of
  codimension one},
        date={1989},
        ISSN={0214-1493},
     journal={Publ. Mat.},
      volume={33},
      number={3},
       pages={461\ndash 484},
         url={http://dx.doi.org/10.5565/PUBLMAT_33389_07},
      review={\MR{1038484}},
}

\bib{champ-guira}{article}{
      author={Champetier, C.},
      author={Guirardel, V.},
       title={Limit groups as limits of free groups},
        date={2005},
        ISSN={0021-2172},
     journal={Israel J. Math.},
      volume={146},
       pages={1\ndash 75},
         url={http://dx.doi.org/10.1007/BF02773526},
      review={\MR{2151593}},
}

\bib{CMR}{article}{
      author={{Clay}, A.},
      author={{Mann}, K.},
      author={{Rivas}, C.},
       title={{On the number of circular orders on a group}},
        date={2017},
     journal={ArXiv e-prints},
      eprint={1704.06242},
}

\bib{dehornoy14}{article}{
      author={Dehornoy, P.},
       title={Monoids of {$O$}-type, subword reversing, and ordered groups},
        date={2014},
        ISSN={1433-5883},
     journal={J. Group Theory},
      volume={17},
      number={3},
       pages={465\ndash 524},
         url={http://dx.doi.org/10.1515/jgt-2013-0049},
      review={\MR{3200370}},
}

\bib{DehornoyBook}{book}{
      author={Dehornoy, P.},
      author={Dynnikov, I.},
      author={Rolfsen, D.},
      author={Wiest, B.},
       title={Ordering braids},
      series={Mathematical Surveys and Monographs},
   publisher={American Mathematical Society, Providence, RI},
        date={2008},
      volume={148},
        ISBN={978-0-8218-4431-1},
         url={http://dx.doi.org/10.1090/surv/148},
      review={\MR{2463428}},
}

\bib{DKN2009}{article}{
      author={Deroin, B.},
      author={Kleptsyn, V.A.},
      author={Navas, A.},
       title={On the question of ergodicity for minimal group actions on the
  circle},
        date={2009},
     journal={Mosc. Math. J.},
      volume={9},
      number={2},
       pages={263\ndash 303},
      review={\MR{2568439}},
}

\bib{DKN2014}{article}{
      author={Deroin, B.},
      author={Kleptsyn, V.A.},
      author={Navas, A.},
       title={On the ergodic theory of free group actions by real-analytic
  circle diffeomorphisms},
        date={2014},
     journal={Invent. Math.},
      note = {To appear},
}

\bib{GOD}{book}{
      author={Deroin, B.},
      author={Navas, A.},
      author={Rivas, C.},
       title={Groups, orders and dynamics},
   publisher={Arxiv e-print (2016), available at  1408.5805},
        note={},
}

\bib{DD}{article}{
      author={Dubrovina, T.~V.},
      author={Dubrovin, N.~I.},
       title={On braid groups},
        date={2001},
        ISSN={0368-8666},
     journal={Mat. Sb.},
      volume={192},
      number={5},
       pages={53\ndash 64},
         url={http://dx.doi.org/10.1070/SM2001v192n05ABEH000564},
      review={\MR{1859702}},
}

\bib{FK2012}{article}{
      author={Filimonov, D.A.},
      author={Kleptsyn, V.A.},
       title={Structure of groups of circle diffeomorphisms with the property
  of fixing nonexpandable points},
        date={2012},
     journal={Funct. Anal. Appl.},
      volume={46},
      number={3},
       pages={191\ndash 209},
      review={\MR{3075039}},
}

\bib{Ghys}{article}{
      author={Ghys, \'E.},
       title={Groups acting on the circle},
        date={2001},
        ISSN={0013-8584},
     journal={Enseign. Math. (2)},
      volume={47},
      number={3-4},
       pages={329\ndash 407},
      review={\MR{1876932}},
}

\bib{IM}{article}{
      author={Inaba, T.},
      author={Matsumoto, S.},
       title={Resilient leaves in transversely projective foliations},
        date={1990},
        ISSN={0040-8980},
     journal={J. Fac. Sci. Univ. Tokyo Sect. IA Math.},
      volume={37},
      number={1},
       pages={89\ndash 101},
      review={\MR{1049020}},
}

\bib{ItoAlg}{article}{
      author={Ito, T.},
       title={Dehornoy-like left orderings and isolated left orderings},
        date={2013},
        ISSN={0021-8693},
     journal={J. Algebra},
      volume={374},
       pages={42\ndash 58},
         url={http://dx.doi.org/10.1016/j.jalgebra.2012.10.016},
      review={\MR{2998793}},
}

\bib{mane}{article}{
      author={Ma\~n\'e, R.},
       title={Hyperbolicity, sinks and measure in one-dimensional dynamics},
        date={1985},
        ISSN={0010-3616},
     journal={Comm. Math. Phys.},
      volume={100},
      number={4},
       pages={495\ndash 524},
         url={http://projecteuclid.org/euclid.cmp/1104114003},
      review={\MR{806250}},
}

\bib{MR}{article}{
      author={{Mann}, K.},
      author={{Rivas}, C.},
       title={{Group orderings, dynamics, and rigidity}},
        date={2016},
     journal={Ann. Inst. Fourier (Grenoble)},
      note = {To appear},
}

\bib{Matsumoto}{incollection}{
      author={Matsumoto, S.},
       title={Measure of exceptional minimal sets of codimension one
  foliations},
        date={1988},
   booktitle={A f\^ete of topology},
   publisher={Academic Press, Boston, MA},
       pages={81\ndash 94},
      review={\MR{928398}},
}

\bib{BP}{article}{
      author={Matsumoto, S.},
       title={Basic partitions and combinations of group actions on the circle:
  a new approach to a theorem of {K}athryn {M}ann},
        date={2016},
     journal={Enseign. Math.},
      volume={62},
      number={1-2},
       pages={15\ndash 47},
      review={\MR{3605808}},
}

\bib{MatsumotoLO}{article}{
      author={Matsumoto, S.},
       title={Dynamics of isolated left orders},
        date={2017},
     journal={ArXiv e-prints},
      eprint={1701.05987},
}

\bib{McCleary2}{article}{
      author={McCleary, S.H.},
       title={Free lattice-ordered groups represented as {$o$}-{$2$} transitive
  {$l$}-permutation groups},
        date={1985},
        ISSN={0002-9947},
     journal={Trans. Amer. Math. Soc.},
      volume={290},
      number={1},
       pages={69\ndash 79},
         url={http://dx.doi.org/10.2307/1999784},
      review={\MR{787955}},
}

\bib{NavasFourier}{article}{
      author={Navas, A.},
       title={On the dynamics of (left) orderable groups},
        date={2010},
        ISSN={0373-0956},
     journal={Ann. Inst. Fourier (Grenoble)},
      volume={60},
      number={5},
       pages={1685\ndash 1740},
         url={http://aif.cedram.org/item?id=AIF_2010__60_5_1685_0},
      review={\MR{2766228}},
}

\bib{NavasAlg}{article}{
      author={Navas, A.},
       title={A remarkable family of left-ordered groups: central extensions of
  {H}ecke groups},
        date={2011},
        ISSN={0021-8693},
     journal={J. Algebra},
      volume={328},
       pages={31\ndash 42},
         url={http://dx.doi.org/10.1016/j.jalgebra.2010.10.020},
      review={\MR{2745552}},
}

\bib{RivasLO}{article}{
      author={Rivas, C.},
       title={Left-orderings on free products of groups},
        date={2012},
        ISSN={0021-8693},
     journal={J. Algebra},
      volume={350},
       pages={318\ndash 329},
         url={http://dx.doi.org/10.1016/j.jalgebra.2011.10.036},
      review={\MR{2859890}},
}

\bib{sikora}{article}{
      author={Sikora, A.S.},
       title={Topology on the spaces of orderings of groups},
        date={2004},
        ISSN={0024-6093},
     journal={Bull. London Math. Soc.},
      volume={36},
      number={4},
       pages={519\ndash 526},
         url={http://dx.doi.org/10.1112/S0024609303003060},
      review={\MR{2069015}},
}

\bib{zheleva}{article}{
      author={\v{Z}eleva, S.D.},
       title={Cyclically ordered groups},
        date={1976},
        ISSN={0037-4474},
     journal={Sibirsk. Mat. \v{Z}.},
      volume={17},
      number={5},
       pages={1046\ndash 1051, 1197},
      review={\MR{0422106}},
}

\end{biblist}
\end{bibdiv}
\end{document}